\documentclass[11pt]{amsart}
\usepackage[margin=1in]{geometry}
\calclayout
\usepackage[utf8]{inputenc}
\usepackage{amsmath, amsthm, amssymb, mathtools, enumitem, breqn, footnote, booktabs}
\usepackage[symbol]{footmisc}
\usepackage[colorlinks=true, citecolor=blue]{hyperref}

\newtheorem{fact}{Fact}[section]
\newtheorem{define}{Definition}[section]
\newtheorem{prop}{Proposition}[section]
\newtheorem{theorem}{Theorem}[section]
\newtheorem{lemma}{Lemma}[section]
\newtheorem{cor}{Corollary}[section]
\newtheorem{conj}{Conjecture}[section]
\newtheorem{remark}{Remark}[section]

\newcommand{\norm}[1]{\lVert #1 \rVert}
\newcommand{\Norm}[1]{\left\lVert #1 \right\rVert}

\newcommand{\floor}[1]{\lfloor #1 \rfloor}
\newcommand{\Floor}[1]{\left\lfloor #1 \right\rfloor}
\DeclareMathOperator{\ML}{ML}

\title{Amending the Lonely Runner Spectrum Conjecture}

\author{Ho Tin Fan}
\address[Ho Tin Fan]{Massachusetts Institute of Technology}
\email{codetiger927@gmail.com}

\author{Alec Sun}
\address[Alec Sun]{University of Chicago}
\email{sundogx@gmail.com}

\begin{document}

\begin{abstract}
    Let $||x||$ be the absolute distance from $x$ to the nearest integer. For a set of distinct positive integral speeds $v_1, \ldots, v_n$, we define its maximum loneliness, also known as the gap $\delta$, to be
    $$ML(v_1,\ldots,v_n) = \max_{t \in \mathbb{R}}\min_{1 \leq i \leq n} || tv_i||.$$
    The Loneliness Spectrum Conjecture, recently proposed by Kravitz (2021), asserts that
    $$\exists s \in \mathbb{N}, \text{ML}(v_1,\ldots,v_n) = \frac{s} {sn + 1} \text{ or } \text{ML}(v_1,\ldots,v_n) \geq \frac{1}{n}.
    $$
    We disprove the Loneliness Spectrum Conjecture for $n = 4$ with an infinite family of counterexamples and propose an alternative conjecture. We confirm the amended conjecture for $n = 4$ whenever there exists a pair of speeds with a common factor of at least $3$ and also prove some related results.
\end{abstract}

\maketitle

\section{Introduction}

A long-standing problem in number theory is the Lonely Runner Conjecture, first introduced by Wills in 1965 \cite{willsthesis, wills} and now in its 60th anniversary \cite{perarnau}. The conjecture focuses on the following scenario: Let $n$ runners start on the same point and each run at a pairwise-distinct constant velocity on a circular unit-length race track, namely $\mathbb{R}/\mathbb{Z}$. At a particular moment in time, a runner is considered ``lonely'' if he or she is at least $\frac{1}{n}$ units of distance away from every other runner. The Lonely Runner Conjecture asserts that all runners will sooner or later become lonely regardless of the speeds, possibly at different times. 

\par Another popular reformulation of the problem arises when we focus on one runner's frame of reference. Each of the other runners' velocities is subtracted by the chosen runner's velocity, while the chosen runner remains fixed at the start point. We define the loneliness of $n$ moving runners with nonzero velocities $v_1,\ldots,v_n \in \mathbb{R}$ as the maximum achievable distance $L$ such that there exists a time when all $n$ runners are $L$ away from the start point. Let $\norm{x}$ denote the absolute distance from $x$ to the nearest integer, then the loneliness value, also known as the gap $\delta$ in other papers, can be more formally written as
$$\ML(v_1,\ldots,v_n) = \delta(v_1,\ldots,v_n) = \max_{t \in \mathbb{R}}\min_{1 \leq i \leq n} \norm{tv_i}.$$
\par We further note that relative to one runner, the sign of other runners' velocities is irrelevant, so we can further restrict our discussion to only positive speeds. The Lonely Runner Conjecture for $n + 1$ runners thus asserts the following condition regarding the loneliness value.
\begin{conj} \label{lrc}
    [Wills, Cusick] For any set of $n$ nonzero positive speeds $v_1,\ldots,v_n \in \mathbb{R}^+$,
    $$\ML(v_1,\ldots,v_n) \geq \frac{1}{n + 1}.$$
\end{conj}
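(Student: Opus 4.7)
The Lonely Runner Conjecture is a notorious open problem; it has been verified only for $n+1\leq 7$ runners, and every known case uses a delicate combination of Bohr-set covers, primitive-speed rescalings, and finite case checks. My plan is therefore to outline the standard reductions and describe the approach I would attempt, while being candid that I do not expect to resolve the full conjecture.

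First, I would perform the standard reductions. Since $\ML$ is continuous in the speeds and the rationals are dense, it suffices to prove the bound when each $v_i$ is a positive integer; using $\ML(\lambda v_1,\ldots,\lambda v_n) = \ML(v_1,\ldots,v_n)$ for $\lambda\in\mathbb{R}^+$ together with the fact that rescaling $t$ absorbs any rational factor, one may further assume $\gcd(v_1,\ldots,v_n)=1$. The problem then reduces to the following geometric statement: the $1$-dimensional subtorus $\{t(v_1,\ldots,v_n)\bmod 1 : t\in\mathbb{R}\}\subset(\mathbb{R}/\mathbb{Z})^n$ must contain a point whose every coordinate lies at distance at least $\tfrac{1}{n+1}$ from $\mathbb{Z}$, i.e., a point avoiding the forbidden ``thin cross'' $\bigcup_i\{x: \norm{x_i} < \tfrac{1}{n+1}\}$.

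The principal technique I would try is the \emph{coprime-scaling} argument combined with a size reduction. If $k$ is any positive integer with $\gcd(k,\lcm(v_1,\ldots,v_n))=1$, then $\ML(kv_1,\ldots,kv_n)=\ML(v_1,\ldots,v_n)$, so one is free to multiply the speed tuple by any unit modulo any prime $p$. Using this together with the Chinese Remainder Theorem, the plan would be to locate a prime $p$ and a witness time of the form $t=j/p$ such that, after a suitable scaling, the residues $v_i\bmod p$ are forced into a pattern keeping every runner at distance $\geq\tfrac{1}{n+1}$ from the origin. Complementarily, one would invoke a gap-bound such as $\max_i v_i \leq N(n)$ (using density-type arguments or $L^2$ Fourier averaging over the torus) so that only finitely many speed tuples need to be checked directly.

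The main obstacle, and the reason the conjecture has resisted proof for $n\geq 7$, is that these two strategies do not fit together uniformly: the scaling trick only controls speeds with accessible arithmetic structure modulo small primes, while every known size-reduction produces a bound on the $v_i$ that grows at least exponentially in $n$, leaving too many configurations to verify. The crux of a complete proof would be a common refinement, namely a uniform modulus $M=M(n)$ such that every residue pattern in $(\mathbb{Z}/M\mathbb{Z})^n$ either admits a scaling witness or a pigeonhole witness of loneliness at least $\tfrac{1}{n+1}$. I fully expect this to be the step where my plan breaks down, as it is exactly the step where every prior attempt in the literature has stalled.
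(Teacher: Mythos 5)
You have correctly identified that this statement is the Lonely Runner Conjecture itself, which is an open problem, and the paper does not (and does not claim to) prove it --- it is stated as a conjecture and used only as a hypothesis or as a known fact in the small cases ($n \leq 6$) where it has been verified in the literature. Your honesty that no complete proof exists is exactly the right posture, and your sketch of the standard reductions (real $\to$ rational $\to$ integer speeds with $\gcd = 1$, and the reformulation as a torus-avoidance problem) matches the reductions the paper does carry out for the statements it actually proves, in particular its use of the Bohman--Holzman--Kleitman lemma for the irrational case and the rescaling-by-$l$ argument for rational speeds. The coprime-scaling and size-reduction ideas you describe are also in the spirit of the paper's toolkit (pre-jump, Pigeonhole over residues modulo a common factor, and the ``Very Fast Runner'' lemmas to reduce to finitely many cases), so your diagnosis of where such an approach stalls is accurate. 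Since neither you nor the paper gives a proof, there is nothing substantive to compare beyond agreeing that the conjecture remains open; no gap to report.
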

\par So far Conjecture~\ref{lrc} has been proven for up to $7$ runners \cite{barajas}. More recently, computer-assisted proofs have established the conjecture for $8$ runners \cite{rosenfeld8}, and refinements of these methods have been announced for $9$ and $10$ runners \cite{trakulthongchai910,rosenfeld9}. We remark that if the conjecture is to be true, the lower bound is sharp since there are known cases of equality. These cases are often referred as \emph{tight speed sets}, and a trivial construction is setting $v_i = i$ for all $1 \leq i \leq n$. However, there are more erratic cases of equality, as demonstrated by the work of Goddyn and Wong \cite{Goddyn2006TIGHTIO}. While the problem was first asked in relation to the field of diophantine approximation, it is connected to many other fields, including but not limited to: distance graphs' chromatic number \cite{10.11650/twjm/1500404981}, flows in graphs and matroids \cite{BIENIA19981}, and view-obstruction problems \cite{cusick}.

\par We focus on the “near-tight’’ regime $\,[1/(n+1),\,1/n)$. 
Assuming the Lonely Runner Conjecture, any configuration with at most $n-1$ moving runners already guarantees a gap of at least $1/n$.  Hence gaps strictly below $1/n$ can occur only when \emph{all $n$ moving runners are present} (equivalently, in the original formulation with $n+1$ runners on the circle, all $n+1$ velocities are pairwise distinct).
 Here, Kravitz questions whether the spectrum of achievable loneliness values within this interval is dense, to which he conjectures that, contrary to expectation, it is discrete \cite{kravitz}. He then proposes the following conjecture to further sharpen and formalize this observation.
\begin{conj}[Loneliness Spectrum Conjecture \cite{kravitz}]
\label{con1}
    For any positive integers $v_1,\ldots,v_n$, we have either
    $$\exists s \in \mathbb{N},  \ML(v_1,\ldots,v_n) = \frac{s}{ns + 1} \text{ or } \ML(v_1,\ldots,v_n) \geq \frac{1}{n}.$$
\end{conj}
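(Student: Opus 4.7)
The paper's title and abstract announce that the authors refute the Loneliness Spectrum Conjecture for $n=4$, so my proposal is a disproof strategy: locate a 4-tuple of positive integer speeds whose maximum loneliness lies in the interval $(1/5, 1/4)$ but is not of the form $s/(4s+1)$, i.e.\ not a member of the sequence $\{1/5,\, 2/9,\, 3/13,\, 4/17,\ldots\}$. Since this is an existential claim, a single rigorously verified counterexample suffices.

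The first step is to set up an exact computation of $\ML(v_1,\ldots,v_4)$ for integer inputs. Each $\norm{tv_i}$ is a $1/v_i$-periodic sawtooth with slopes $\pm v_i$, so $\min_i \norm{tv_i}$ is piecewise linear with finitely many pieces per period, and its local maxima occur either at points where $\norm{tv_i}=\norm{tv_j}$ for some pair $i \neq j$, or where some $\norm{tv_i}=1/2$. Both types of critical points are rational with denominators dividing $2(v_i \pm v_j)$ or $2v_i$, which yields a polynomial-size list of candidate times over which to maximize. I would enumerate tuples $(v_1,v_2,v_3,v_4)$ with $\max v_i$ up to some growing bound $N$, compute $\ML$ exactly with rational arithmetic, and flag any tuple whose $\ML$ lies in $(1/5,1/4)$ but is not of the form $s/(4s+1)$. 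Heuristically, candidates with a large pairwise gcd or with modular structure imitating known tight configurations are natural first targets.

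Once a candidate $(v_1,\ldots,v_4)$ with $\ML = c \notin \{s/(4s+1) : s \in \mathbb{N}\}$ is identified by the search, the second step is a rigorous verification independent of the computer. This splits into producing an explicit rational $t^*$ with $\min_i \norm{t^* v_i} = c$, which is routine, and establishing the matching upper bound $\ML \leq c$, which is the serious part. For the upper bound I would partition $\mathbb{R}/\mathbb{Z}$ into finitely many arcs determined by the zeros and half-periods of the $\norm{tv_i}$ and, on each arc, bound one of the sawtooths below $c$ by direct calculation. Producing a clean, hand-verifiable covering argument that rules out \emph{every} $t \in \mathbb{R}$ is the main obstacle, since the case analysis grows quickly with the size of the speeds and small arithmetic coincidences between the $v_i$ can force the partition to be quite fine.

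Finally, once a concrete counterexample is in hand, I would inspect the offending $\ML$-value and the symmetries of the tuple to guess what additional family of rationals belongs in the spectrum, and formulate an amended conjecture enlarging the Kravitz sequence. The abstract's mention of a confirmation under the hypothesis that two speeds share a common factor at least $3$ suggests that such a divisibility hypothesis permits a structural reduction — for instance, factoring out the shared divisor and analyzing an induced lower-dimensional problem — so for the partial result I would attempt precisely such a reduction after the counterexample is established, patching the resulting cases together with the $n=3$ instance of the conjecture as a base.
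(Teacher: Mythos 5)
Your high-level plan --- disprove the statement by exhibiting a four-speed counterexample whose $\ML$ lies in $(1/5,1/4)$ but is not of the form $s/(4s+1)$ --- is exactly what the paper does, and the reduction to finitely many candidate critical times is the right first move (in fact Kravitz's Lemma~\ref{lem1}, quoted in the paper, gives a sharper list than yours: after normalizing to $\gcd(v_1,\ldots,v_n)=1$, every local maximum occurs at $t=m/(v_i+v_j)$ with $i<j$, so the $2(v_i\pm v_j)$ and $2v_i$ denominators you mention are extraneous). But the proposal stops short of being a disproof: you never exhibit a concrete tuple or value, and you explicitly flag the genuinely hard step --- a clean, hand-checkable upper bound $\ML \le c$ --- as an open obstacle without resolving it. An existential claim is only refuted once a witness is produced and certified, so as written this is a plan, not a proof.

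The idea you are missing is the one that makes the paper's verification tractable: rather than searching blindly and then fighting a fine arc-partition, choose a one-parameter family whose arithmetic collapses the casework. The paper takes $(v_1,v_2,v_3,v_4)=(8,\,4s+3,\,4s+11,\,4s+19)$. Three of the six pairwise sums equal a speed or twice a speed, so $L_{1,2}=L_{1,3}=L_{2,4}=0$ immediately, and for the remaining three pairs the residues of the speeds modulo $v_i+v_j$ reduce (up to sign) to multiples of a single quantity, so each $L_{i,j}$ is bounded by $1/4$ via the tight triple $\ML(1,2,3)=1/4$ (your Corollary~\ref{cor1}/\ref{cor2}). The maximum is then pinned down to $L_{3,4}=\frac{2s+7}{8s+30}=\frac{2s+7}{4(2s+7)+2}$, which is not of the form $s'/(4s'+1)$. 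In particular $s=0$ gives the concrete counterexample $\ML(8,3,11,19)=7/30$. This structured choice replaces your ``partition $\mathbb{R}/\mathbb{Z}$ into arcs'' step with a few lines of modular algebra, which is what actually closes the argument.
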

Note that this conjecture is strictly stronger than the original Lonely Runner Conjecture. Kravitz proved the conjecture for $n = 3$. However, our paper reveals that for $n$ with greater values, there exist counterexamples like $\ML(8,3,11,19) = 7 / 30$ for $n = 4$, $\ML(5,6,11,17,23,28) = 8 / 51$ for $n = 6$, and $\ML(1,3,4,5,7,13,18) = 3 / 23$ for $n = 7$. Nonetheless, our experimental data indicate that the intuition of a discrete spectrum remains true. Thus, we propose a new modified version of the Loneliness Spectrum Conjecture.
\begin{conj}[Amended Loneliness Spectrum Conjecture]
\label{con2}
    For any positive integers $v_1,\ldots,v_n$, we have either
    $$\exists s,k \in \mathbb{N},k \leq n,  \ML(v_1,\ldots,v_n) = \frac{s}{ns + k} \text{ or } \ML(v_1,\ldots,v_n) \geq \frac{1}{n}.$$
\end{conj}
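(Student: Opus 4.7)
The plan is to analyze the local maxima of $F(t) = \min_i \norm{tv_i}$ and exploit the rigidity forced at an optimal $t^*$ when $\ML(v_1,\ldots,v_n) < 1/n$. Since $F$ is continuous, piecewise linear, and periodic, its maximum $L = \ML(v_1,\ldots,v_n)$ is attained. Let $T = \{i : \norm{t^* v_i} = L\}$ be the set of tight runners, and for $i \in T$ write $t^* v_i = m_i + \varepsilon_i L$ with $m_i \in \mathbb{Z}$ and $\varepsilon_i \in \{-1, +1\}$. The one-sided derivatives of $F$ at $t^*$ are determined by $\{\varepsilon_i v_i : i \in T\}$, so local maximality forces both positive and negative signs to appear.

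Picking $i, j \in T$ with $\varepsilon_i = +1$ and $\varepsilon_j = -1$, I would solve the two tightness equations to obtain $t^* = (m_i + m_j)/(v_i + v_j)$ and hence $L = (m_j v_i - m_i v_j)/(v_i + v_j)$. Writing $L = p/q$ in lowest terms, this shows $q \mid v_i + v_j$, which is consistent with the $k = 1$ case of the amended conjecture. To produce $k \geq 2$, I would look at configurations where a third (or more) runner is tight: each such $\ell \in T$ gives another linear relation among the $m_r$, $v_r$, and $L$, and if one further demands $L < 1/n$, these relations should cut out precisely the values $L = s/(ns + k)$ with $k$ equal to the effective number of independent tight constraints.

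For the $n = 4$ case with a common factor $d = \gcd(v_i, v_j) \geq 3$, say $v_3 = da$ and $v_4 = db$ with $\gcd(a, b) = 1$, I would exploit the factorization to reduce to a subproblem accessible via Kravitz's proof of Conjecture~\ref{con1} for $n = 3$. Splitting on whether the optimal $t^*$ satisfies $dt^* \in \frac{1}{a+b}\mathbb{Z}$ (so that $v_3$ and $v_4$ are simultaneously tight) or not, the first sub-case reduces, after rescaling by $d$, to an $n = 3$ configuration whose spectrum is already known, while the second sub-case leaves at most one of $v_3, v_4$ tight and can be handled by a Bézout calculation applied to the remaining three effective speeds.

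The hard part will be quantifying how the constraint $L < 1/n$ forces the tight set $T$ and its sign pattern into a configuration that yields $k \leq n$. The counterexamples $\ML(8, 3, 11, 19) = 7/30$ (where $k = 2$) and $\ML(5, 6, 11, 17, 23, 28) = 8/51$ (where $k = 3$) show that the bound $k \leq n$ is delicate and that any proof must be sensitive to intricate Diophantine coincidences among the speeds. The common-factor hypothesis side-steps much of this by providing an algebraic reduction, but extending the argument to the general case appears to require a uniform classification of rigidity patterns, which I expect is the essential obstacle to proving the full conjecture.
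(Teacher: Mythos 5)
This statement is a conjecture that the paper itself does not prove; the authors pose it and then establish only partial results. Specifically, Theorems~\ref{the1}, \ref{the3}, and \ref{the4} show that for $n = 4$, if two speeds share a common factor $g \geq 3$ (with one exceptional family when $g = 3$), then $\ML \geq \frac{1}{4}$ — which satisfies the conjecture only vacuously through its second alternative. The authors state explicitly in Section~\ref{sec5} that these results do not address the discrete spectrum $[1/(n+1), 1/n)$ at all. So there is no proof in the paper to compare against; any genuine attack on the spectrum structure would already be new.

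Your local-maxima analysis reproduces Kravitz's result (Lemma~\ref{lem1} in the paper): maxima of $\min_i \norm{tv_i}$ occur at $t = m/(v_i + v_j)$, so the reduced denominator of $L$ divides some $v_i + v_j$. That much is solid and matches the paper's stated motivation: writing $v_i + v_j = ns + k$ with $0 \leq k < n$, the loneliness at those times is at least $\floor{(v_i + v_j)/n}/(v_i + v_j) = s/(ns + k)$. But your proposed mechanism for producing $k$ — that it ``equal[s] the effective number of independent tight constraints'' — is unsupported and, I believe, false. In $\ML(8, 3, 11, 19) = 7/30$, the value $k = 2$ arises because the controlling pair satisfies $v_3 + v_4 = 30 = 4\cdot 7 + 2$: here $k$ is the residue of a two-runner sum modulo $n$, not a count of tight runners. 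Even after correcting that interpretation, you would still need to show that the numerator at the controlling local maximum actually attains $\floor{(v_i + v_j)/n}$ rather than something smaller, and that no other local maximum beats it — exactly the Diophantine obstructions you name at the end but do not resolve.

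Your common-factor reduction also does not match what the paper does, and as sketched it does not go through. The paper (Theorem~\ref{the3}) fixes a time $t$ at which the two runners sharing the factor $g$ are simultaneously at distance $\geq \frac{1}{4}$, then applies the pre-jump $t \mapsto t + h/g$ — which leaves those two runners' positions unchanged because $g$ divides their speeds — together with a pigeonhole sweep (Lemma~\ref{lem2}) to place the remaining two runners favorably. Your idea of ``rescaling by $d$'' and citing Kravitz's $n = 3$ spectrum theorem is not equivalent: rescaling time by $d$ sends $v_1, v_2$ to non-integer effective speeds, rescaling speeds by $d$ does not reduce the dimension, and in either case the positions of the unconstrained runners are not frozen the way pre-jump freezes the constrained ones. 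Moreover, Kravitz's $n = 3$ result classifies spectrum values, not the kind of $\geq \frac{1}{4}$ bound needed after the reduction. You have correctly located the essential obstacle — the interaction of the tight configuration with the constraint $L < 1/n$ — but the reduction would need to be rebuilt around the pre-jump/pigeonhole mechanism to have a chance of closing.
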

\par We refer to Conjecture \ref{con1} as the \emph{Loneliness Spectrum Conjecture} and to Conjecture \ref{con2} as the \emph{Amended Loneliness Spectrum Conjecture}.  Throughout, we may (and do) restrict attention to times of the form 
$$t=\tfrac{m}{v_i+v_j}$$
with $m\in\mathbb{Z}$ and $1\le i<j\le n$. This reduction is standard (see, e.g., the discussion around \cite[Prop.~2.1]{kravitz} and Lemma \ref{lem1}).

\begin{itemize}
  \item In particular, arguments of Kravitz show that at such times one obtains the lower bound
  \[
     \ML(v_1,\ldots,v_n)\ \ge\ \frac{\lfloor (v_i+v_j)/n\rfloor}{\,v_i+v_j\,}.
  \]
  Writing \(v_i+v_j = ns+k\) with integers \(s\ge 0\) and \(0\le k<n\) yields the value $s/(sn+k)$, which in turn recovers the baseline $\ML \geq 1/n$ and suggests a discrete family of denominators of the form \(sn+k\).
  \item In fact, we proved in Section \ref{sec4} that for the case of $n=4$, if there exists a pair of speeds with a greatest common factor of at least $4$, the loneliness value $\ML$ is at least $1/n$. We further show that if the greatest common divisor is $3$, then the loneliness value $\ML$ is also at least $1/n$ except when the speeds abide by the form $\ML(1,2,3,12k) = 3k/(12k+1)$. In both of these cases, the loneliness value either lies within the region above $1/n$ or forms a discrete spectrum as predicted by the Loneliness Spectrum. Note that we avoid calling this region ``non-discrete'': while $1/n$ is known to be an accumulation point of gaps, it is not known that every value above $1/n$ is an accumulation point.
 It remains to show that the conjecture continues to hold true when the greatest common divisor is at most $2$ or when speeds are pairwise co-prime.
  \item We also ran computations for \(4\le n\le 6\) (see Section \ref{sec5} for ranges and details). 
  In all tested instances, the amended form holds; moreover, the data indicate an even sharper restriction on the possible \(k\)-values, which we discuss in Section \ref{sec5}.
\end{itemize}

We now present a roadmap of the main contributions of this paper.

\section{Main Results}
In Section \ref{sec2}, we develop several new tools. We extend Kravitz's pre-jump, which converts the continuous problem into a modular framework, to prove a new lower bound for many modular setups (Lemma \ref{lem2}) that guarantees a loneliness value of $1/4$. We also build upon existing arguments for cases when one runner is much faster than the rest (Lemma \ref{lem3}) to prove a new lower bound when two runners of comparable speeds dominate (Lemma \ref{lem4}). Finally, we establish the following general lower bound when all but one speed share a common factor.

\begin{theorem}
    For $n \geq 4$, let $v_1, v_2, \ldots, v_n$ be positive integers with $\gcd(v_1,\ldots, v_n) = 1$ and $\gcd(v_1,\ldots,v_{n-1}) = g \geq 2$ such that the Lonely Runner Conjecture holds for $v_1, \ldots, v_{n-1}$. Then
    $$
    \ML(v_1,\ldots,v_n) \geq \frac{1}{n}.
    $$
\end{theorem}

In Section \ref{sec3}, we obtain our first major result: a disproof of the Loneliness Spectrum Conjecture \ref{con2}. We construct an infinite family of counterexamples, showing that for $n=4$ the parameter $k$ can equal $2$ rather than $1$ as predicted.

\begin{theorem}[See also Theorem \ref{the0}]
    For $n = 4$ and integer $s \geq 0$, we have
    $$
    \ML(8,4s + 3,4s + 11,4s + 19) = \frac{2s + 7}{8s + 30} = \frac{2s + 7}{4(2s + 7) + 2}.
    $$
\end{theorem}

In Section \ref{sec4}, we focus on the Amended Loneliness Spectrum Conjecture \ref{con2}, allowing $k$ to range between $1$ and $n$. We then prove the conjecture in a broad class of cases for $n=4$, when two runners share a sufficiently large common factor. More precisely:

\begin{theorem}[See also Theorem \ref{the3}, \ref{the4} and Proposition \ref{prop1}]
    Let $v_1, v_2, v_3, v_4$ be positive integers with $\gcd(v_1,v_2,v_3,v_4) = 1$. If there exists a pair of speeds that share a common factor $g > 3$, then
    $$
    \ML(v_1,v_2,v_3,v_4) \geq \frac{1}{4}.
    $$
    Likewise, if there exists a pair of speeds that share a common factor $g = 3$ and $(v_1,v_2,v_3,v_4) \not= (1,2,3,12k)$, then
    $$
    \ML(v_1,v_2,v_3,v_4) \geq \frac{1}{4}.
    $$
    Finally, if $(v_1, v_2, v_3, v_4) = (1,2,3,12k)$,
    $$
    \ML(v_1,v_2,v_3,v_4) = \frac{3k}{4(3k) + 1}.
    $$
\end{theorem}

The first two cases fall under the region above $1/n$, while the final case provides an infinite family within the discrete spectrum $[1/(n+1),1/n)$, consistent with the amended conjecture.

Finally, in Section \ref{sec5}, we present computer-assisted experiments for $n \in \{4,5,6,7,8\}$ (Table \ref{table1}) and discuss the incomplete appearances of $k$. These motivate a stronger conjecture \ref{con4}, which restricts the parameter $k$ to at most $n/2$.

\section{Preliminaries}
\label{sec2}
\par We now present the preliminaries and assumptions that all subsequent discussions rely on. These are mostly standard reductions and folklore observations; we include them for completeness and to make the paper self-contained.

\begin{lemma}[Bohman, Holzman, and Kleitman \cite{bohman}]
\label{lem5}
    Let $0 < \delta < \tfrac{1}{2}$ such that $\ML(u_1,\ldots,u_{n - 1}) > \delta$ for all choices of positive real speeds $u_1,\ldots,u_{n - 1}$. Then for positive real speeds $v_1,\ldots,v_n$, if there exists a pair of speeds with irrational ratio, i.e.\ $\tfrac{v_i}{v_j} \notin \mathbb{Q}$, we have $\ML(v_1,\ldots,v_n) > \delta$.
\end{lemma}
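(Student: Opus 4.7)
The strategy is to argue by contradiction. Suppose $\ML(v_1, \ldots, v_n) \leq \delta$ and, after relabeling, $v_1/v_2 \notin \mathbb{Q}$. Applying the hypothesis to the $(n-1)$-tuple $(v_2, \ldots, v_n)$ yields some $t^*$ and margin $\eta > 0$ with $\norm{t^* v_i} > \delta + \eta$ for every $i \geq 2$, so the open set $A := \{t \in \mathbb{R} : \norm{tv_i} > \delta \text{ for all } i \geq 2\}$ is non-empty; the contradiction hypothesis forces $A \subseteq \{t : \norm{tv_1} \leq \delta\}$. My goal is to produce $t \in A$ with $\norm{tv_1} > \delta$, exploiting the irrationality of $v_1/v_2$.

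The main tool is Kronecker's equidistribution theorem applied to the curve $\phi(t) := (tv_1, \ldots, tv_n) \bmod 1 \in \mathbb{T}^n$. Its orbit closure $G$ is a connected subtorus of dimension $d := \dim_{\mathbb{Q}}\mathrm{span}(v_1, \ldots, v_n) \geq 2$, and the projection $\pi$ onto the last $n-1$ coordinates sends $G$ onto the analogous subtorus $G' \subseteq \mathbb{T}^{n-1}$ for $(v_2, \ldots, v_n)$. It suffices to find $x \in G$ with $\norm{x_i} > \delta$ for all $i$, since $x$ can then be approximated by $\phi(t)$ for some $t$, contradicting $\ML(v) \leq \delta$. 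The hypothesis already provides $y^* := \pi(\phi(t^*)) \in G' \cap U'$, where $U' := \{y : \norm{y_i} > \delta\}$ is open with positive Haar measure in $G'$, so only the first coordinate needs improvement.

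The analysis splits on whether $v_1 \in V := \mathrm{span}_{\mathbb{Q}}(v_2, \ldots, v_n)$. If $v_1 \notin V$, an annihilator calculation shows $G = \mathbb{T} \times G'$, so the fiber of $\pi|_G$ over $y^*$ is a full circle, and any $x_1 \in \mathbb{T}$ with $\norm{x_1} > \delta$ completes the point $x = (x_1, y^*)$. If $v_1 \in V$, let $N \in \mathbb{Z}_{>0}$ be the minimal integer with $Nv_1 = \sum_{i \geq 2} a_i v_i$ for integers $a_i$; then $\pi|_G$ is an $N$-sheeted cover whose fiber over $y \in G'$ consists of the $N$ equispaced points $\{(\chi(y) + k)/N \bmod 1 : 0 \leq k < N\}$ with $\chi(y) := \sum_{i \geq 2} a_i y_i \bmod 1$. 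When $N > 1/(1 - 2\delta)$, a packing argument shows these points cannot all fit in $[-\delta, \delta]$, so any single fiber already provides a good lift $x_1$ and the proof is done.

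The hard part is the remaining subcase $v_1 \in V$ with small $N$. Here the irrationality $v_1/v_2 \notin \mathbb{Q}$ together with $v_1 \in V$ forces some $v_j$ with $j \geq 3$ to satisfy $v_j/v_2 \notin \mathbb{Q}$; this makes $\chi$ a non-trivial, hence surjective, continuous homomorphism $G' \to \mathbb{T}$, pushing Haar measure on $G'$ forward to uniform measure on $\mathbb{T}$. The ``bad'' set $B \subset \mathbb{T}$ of $\chi$-values forcing all $N$ lifts into $[-\delta, \delta]$ has Lebesgue measure $1 - N(1 - 2\delta) < 1$, and one must show $G' \cap U' \not\subseteq \chi^{-1}(B)$. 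Because the measure of $G' \cap U'$ is not a priori larger than that of $\chi^{-1}(B)$, a direct measure comparison is insufficient; I would close this gap by applying the lemma's hypothesis to carefully perturbed $(n-1)$-tuples---for instance, scaling one $v_i$ by a nearby rational factor---to enlarge the family of admissible $y$'s and eventually exhibit one whose $\chi$-value escapes $B$.
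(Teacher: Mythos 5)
The paper does not prove this lemma; it is cited directly from Bohman, Holzman, and Kleitman \cite{bohman}, so there is no internal proof to compare against. I will therefore evaluate your proposal on its own terms.

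Your framework---reduce to finding a point of the orbit closure $G \subseteq \mathbb{T}^n$ of $t \mapsto (tv_1,\ldots,tv_n)$ in the open region where every coordinate is $\delta$-far from $0$---is a reasonable one, and the two easy cases are handled correctly. If $v_1 \notin V := \mathrm{span}_\mathbb{Q}(v_2,\ldots,v_n)$ then indeed every relation vector has first coordinate $0$, so $G = \mathbb{T} \times G'$ and the first coordinate is free over the point $y^*$ furnished by the hypothesis. If $v_1 \in V$ and the degree $N$ of the covering $\pi|_G : G \to G'$ satisfies $N > 1/(1-2\delta)$, the $N$ equispaced fiber points over any $y \in G' \cap U'$ cannot all lie in the arc $[-\delta,\delta]$, which has length $2\delta < 1 - 1/N$. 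One observation worth making explicit: when $\dim_\mathbb{Q} V = 1$ all ratios among $v_2,\ldots,v_n$ are rational, so $v_1 \in V$ would force $v_1/v_2 \in \mathbb{Q}$; hence the branch $v_1 \in V$ automatically has $\dim G' \geq 2$. This is the only place the irrationality hypothesis enters your second branch, and it should be stated rather than buried in the phrase ``forces some $v_j$ with $j \geq 3$ to satisfy $v_j/v_2 \notin \mathbb{Q}$.''

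The genuine gap is the remaining case ($v_1 \in V$ with $N \leq 1/(1-2\delta)$), and you have correctly identified that this is where the difficulty lives. The image $\chi(G' \cap U')$ is a nonempty open subset of $\mathbb{T}$ and $B$ has Lebesgue measure $1 - N(1-2\delta) < 1$, but a nonempty open set can certainly sit inside a closed set of measure less than one, so as you note no direct comparison works. Your proposed repair---perturbing the $(n-1)$-tuple by scaling some $v_i$ by a nearby rational $q$ and re-applying the hypothesis---does not clearly produce new admissible $y$'s: the hypothesis applied to $(v_2,\ldots,qv_i,\ldots,v_n)$ yields a time $t^*$ with $\norm{t^* q v_i} > \delta$, which controls the fractional part of $t^* q v_i$, not that of $t^* v_i$, so such times need not land in $U'$ at all; and if $v_i$ is rationally independent of the remaining speeds, the rescaling even changes the subtorus $G'$. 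As written the final step is a heuristic pointer, not an argument, so the proof is incomplete. A more promising route would be to exploit the hypothesis at full strength: it bounds $\ML$ of \emph{every} $(n-1)$-subtuple, and in particular of the integer-coefficient rational tuples that arise after clearing denominators in the relation $Nv_1 = \sum_{i\ge 2} a_i v_i$; but that argument still needs to be supplied.
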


\par This reduction from real to rational speeds is well known (see also Henze--Malikiosis \cite{henze2017} for a related proof without appealing to the inductive LRC assumption). In particular, since the Lonely Runner Conjecture guarantees a loneliness value of at least $1/n$ for $n-1$ runners, we immediately deduce the same lower bound whenever some ratio $v_i/v_j$ is irrational.

\begin{fact}[Scaling invariance]
    For any nonzero $l\in\mathbb{R}$ and any set of integer speeds $v_1,\ldots,v_n$, we have
    \[
       \ML(v_1,\ldots,v_n) = \ML(l v_1,\ldots,l v_n).
    \]
\end{fact}

\par This invariance under dilations is a simple folklore observation. As a consequence, we may always normalize so that $\gcd(v_1,\ldots,v_n)=1$.

\begin{fact}[Reduction to integers]
    Assuming the Lonely Runner Conjecture holds for any $n-1$ positive real speeds and the Amended Spectrum Conjecture holds for $n$ integer speeds, then it also holds for $n$ real speeds.
\end{fact}

\begin{cor}
    If the Amended Spectrum Conjecture holds for all $n$-tuples of integers with $\gcd(v_1,\ldots,v_n)=1$, then it holds for all real speeds.
\end{cor}

\begin{proof}
    If some ratio $v_i/v_j$ is irrational, Lemma~\ref{lem5} implies $\ML(v_1,\ldots,v_n)\ge 1/n$. Otherwise, all ratios are rational, and clearing denominators by multiplying all speeds by their least common multiple reduces them to integer speeds. By assumption, the spectrum conjecture holds in that case.
\end{proof}

\par Finally, we recall a useful folklore lemma:

\begin{lemma}[Folklore; see Czerwiński--Grytczuk \cite{czerwinski2008}, Kravitz \cite{kravitz}]
\label{lem1}
    Let $v_1,\ldots,v_n$ be positive integers $(n \geq 2)$ with $\gcd(v_1,\ldots,v_n) = 1$. Then every local maximum of
    \[
       f(t) = \min_{1 \leq i \leq n} \|t v_i\|
    \]
    occurs at a time of the form
    \[
       t_0 = \frac{m}{v_i + v_j}
    \]
    for some $1 \leq i < j \leq n$ and $m \in \mathbb{Z}$.
\end{lemma}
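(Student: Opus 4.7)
The plan is to exploit the piecewise-linear structure of $f$. Each function $t \mapsto \norm{tv_i}$ is continuous and piecewise linear, with slope $+v_i$ where $tv_i$ sits just above an integer (mod $1$) and slope $-v_i$ where it sits just below one; its breakpoints lie on the discrete set $\{k/(2v_i) : k \in \mathbb{Z}\}$. As the pointwise minimum of finitely many such functions, $f$ is itself continuous and piecewise linear with a discrete breakpoint set, and in particular its local maxima are isolated.

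Fix a local maximum $t_0$. A quick preliminary is that $f(t_0) > 0$: if some $\norm{t_0 v_i} = 0$ then $\norm{tv_i}$ is strictly positive throughout a punctured neighborhood of $t_0$ while every other $\norm{tv_j}$ is continuous, so $f$ actually has a local \emph{minimum} at $t_0$. Now shrink to a neighborhood of $t_0$ that contains no other breakpoint of $f$; on each side there is a single active index, say $f(t) = \norm{tv_i}$ on $(t_0 - \varepsilon, t_0)$ and $f(t) = \norm{tv_j}$ on $(t_0, t_0 + \varepsilon)$. Local maximality forces $\norm{tv_i}$ to have slope $+v_i$ on the left and $\norm{tv_j}$ to have slope $-v_j$ on the right, which in turn yields
\[
    t_0 v_i \;=\; k_i + f(t_0), \qquad t_0 v_j \;=\; k_j - f(t_0)
\]
for some integers $k_i, k_j$. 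Summing the two, $t_0(v_i + v_j) = k_i + k_j \in \mathbb{Z}$, which is precisely the desired $t_0 = m/(v_i + v_j)$, provided $i \neq j$.

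The sticking point is the degenerate case $i = j$, where the same function $\norm{tv_i}$ is active on both sides of $t_0$. Then $\norm{tv_i}$ must itself attain a local maximum at $t_0$, forcing $t_0 v_i = k + \tfrac{1}{2}$ for some $k \in \mathbb{Z}$ and hence $f(t_0) = \tfrac{1}{2}$. Because $\norm{\cdot} \leq \tfrac{1}{2}$ pointwise, this pins $\norm{t_0 v_\ell} = \tfrac{1}{2}$, i.e.\ $t_0 v_\ell \in \tfrac{1}{2} + \mathbb{Z}$, for \emph{every} index $\ell$. Since $n \geq 2$, we may pick any $\ell \neq i$; then $t_0(v_i + v_\ell)$ is an integer, and after relabeling to ensure $i < \ell$ we obtain a valid pair.

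The main obstacle I anticipate is purely bookkeeping: one must choose the neighborhood of $t_0$ carefully so that both the individual breakpoints of each $\norm{tv_i}$ and the ``crossing'' points where the active index of $f$ switches are excluded (apart from $t_0$ itself). This is immediate from discreteness, but it needs to be made explicit before the slope identities above can be invoked cleanly. Once that is in place, the only genuinely nontrivial step is the $i=j$ branch, which cannot be collapsed into the generic argument without separately exploiting $n \geq 2$ to borrow a second index.
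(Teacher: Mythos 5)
Your proof is correct and formalizes exactly the argument the paper only sketches (the paper points to two runners equidistant from the origin and approaching from opposite sides, i.e.\ $t_0 v_i \equiv -t_0 v_j \pmod 1$, which is the conclusion of your slope analysis). The two nontrivial points you flagged---ruling out $f(t_0)=0$ by the discreteness of the zero set, and the degenerate $i=j$ branch yielding $f(t_0)=\tfrac12$ and a borrowed second index---are exactly the right issues, and you handled both correctly.
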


\par Intuitively, a local maximum arises when two runners are positioned symmetrically about the origin; otherwise, perturbing the time increases the minimum distance. This lemma reduces the relevant times to a finite number of candidates.

\begin{define}[Modular notation]
We introduce our notation for describing runner positions. We identify the track with the unit circle $\mathbb{T} = \mathbb{R}/\mathbb{Z}$ and place the origin at $0$. For a given gap parameter $0 \leq \delta \leq 1/2$, we call a runner \emph{valid} at time $t$ if its position $tv_i \bmod 1$ lies at least $\delta$ away from the origin. 
Equivalently, the \textbf{valid region} is the closed interval 
$$ [\,\delta,\, 1-\delta\,] \subset \mathbb{T}, $$
while the complementary open arc 
$$ (\,1-\delta,\, \delta\,) $$
is the \textbf{invalid region}, which we occasionally also denote as $ (\,-\delta,\, \delta\,) $, containing all positions within distance $\delta$ of the origin.
\end{define}

\subsection{Very fast runners}
\par We now briefly shift our attention to a special class of setups that has been actively studied in the literature, namely when the set of speeds satisfies certain lacunarity properties. For example, Dubickas \cite{dubickas2011} verified the Lonely Runner Conjecture for $n \geq 16342$ under the assumption that consecutive speeds grow sufficiently quickly, $v_{j+1}/v_j \geq 1 + 33 \log n / n$. More generally, Peres and Schlag \cite{peres2010} analyzed lacunary sequences in the context of Diophantine approximation and chromatic numbers of distance graphs, showing how exponential growth conditions enforce positive separation from the integers. We focus here on two related conditions, where one or two of the runners are significantly faster than the rest.

\begin{lemma}[One Very Fast Runner \cite{kravitz}]
\label{lem3}
    Let $v_1 < v_2 < \cdots < v_{n - 1}$ be positive integers $(n \geq 2)$ with $\ML(v_1,\ldots,v_{n - 1}) = L$, and fix some $0 < \epsilon < L$. Then we have that
    $\ML(v_1,\ldots,v_n) \geq L - \epsilon$
    whenever
    $v_n \geq \left(\frac{L - \epsilon}{\epsilon}\right)v_{n - 1}$.
\end{lemma}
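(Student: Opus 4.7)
My plan is to show that a perturbation argument around the optimal time for the first $n-1$ runners suffices. Specifically, let $t_0$ be a time achieving $\min_{1 \le i \le n-1} \|t_0 v_i\| = L$. I would first observe that the function $t \mapsto \|tv\|$ is $v$-Lipschitz (immediate from the reverse triangle inequality for $\|\cdot\|$), so for every $\delta$ with $|\delta| \le \epsilon / v_{n-1}$ and every $i \le n-1$,
\[
\|(t_0 + \delta) v_i\| \;\ge\; \|t_0 v_i\| - |\delta|\, v_i \;\ge\; L - \frac{\epsilon}{v_{n-1}}\, v_i \;\ge\; L - \epsilon,
\]
because $v_i \le v_{n-1}$. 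Hence the closed window $I = [t_0 - \epsilon/v_{n-1},\, t_0 + \epsilon/v_{n-1}]$ of length $2\epsilon/v_{n-1}$ is "safe" for the slow runners.

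Next I would analyze the periodic structure of $\|tv_n\|$. Its level set $\{t : \|tv_n\| < L - \epsilon\}$ is a disjoint union of open intervals, each of length $2(L-\epsilon)/v_n$, occurring with period $1/v_n$. A closed interval $I$ can lie entirely in this "bad set" only if it fits inside one of these open bad intervals, which requires the strict inequality $|I| < 2(L-\epsilon)/v_n$. Therefore, as long as
\[
\frac{2\epsilon}{v_{n-1}} \;\ge\; \frac{2(L-\epsilon)}{v_n},
\]
the window $I$ must contain some $t^*$ with $\|t^* v_n\| \ge L - \epsilon$, and at this $t^*$ all $n$ runners are at distance $\ge L - \epsilon$ from the origin, giving $\ML(v_1,\dots,v_n) \ge L - \epsilon$. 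The displayed inequality rearranges precisely to $v_n \ge ((L - \epsilon)/\epsilon)\, v_{n-1}$, which is the hypothesis.

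The proof is essentially a one-paragraph argument once the Lipschitz observation and the periodic structure are noted, so I do not expect a genuine obstacle. The only subtle point is the open-vs-closed distinction: the bad intervals are open because $\|tv_n\| = L - \epsilon$ already counts as "good," which is what allows the bound $v_n \ge ((L-\epsilon)/\epsilon) v_{n-1}$ to be non-strict rather than strict. I would make sure to state this carefully so that the boundary case of equality in the hypothesis is handled correctly.
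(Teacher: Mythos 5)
Your proof is correct. The paper does not reproduce a proof of this lemma (it cites Kravitz), but your argument is the standard one for it and gives exactly the stated constant: the Lipschitz bound on $t\mapsto\norm{tv_i}$ yields a ``safe'' window of length $2\epsilon/v_{n-1}$ around $t_0$, and comparing this against the length $2(L-\epsilon)/v_n$ of the maximal intervals where $\norm{tv_n}<L-\epsilon$ gives the hypothesis after rearranging. Two small points you handled implicitly but are worth stating: first, the bad set is a disjoint union of open intervals of that length only because $L-\epsilon<L\le\tfrac12$, so the intervals do not merge; second, your open-vs-closed remark is exactly why the hypothesis can be non-strict, and it is good that you flagged it.
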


\begin{proof}
Choose $t_0$ such that $\|t_0v_i\|\ge L$ for all $1\le i\le n-1$.
We will construct $t$ with $|t-t_0|\le \frac{L-\epsilon}{v_n}$ such that
$\|tv_i\|\ge L-\epsilon$ for all $1\le i\le n$.

If $\|t_0v_n\|\ge L-\epsilon$, we may take $t=t_0$ and are done.
Otherwise, $t_0v_n\bmod 1$ lies in the invalid arc
\[
(-L+\epsilon,\;L-\epsilon)\subset\mathbb T,
\]
which has length $2(L-\epsilon)$.
Since runner $n$ moves at speed $v_n$, there exists $t_1$ with
\[
|t_1-t_0|\le \frac{L-\epsilon}{v_n}
\quad\text{and}\quad
\|t_1v_n\|=L-\epsilon.
\]

We now verify that the remaining runners remain valid at time $t_1$.
For $1\le i\le n-1$, the displacement of runner $i$ between times
$t_0$ and $t_1$ is at most $|t_1-t_0|v_i$, hence
\[
\|t_1v_i\|
\ge \|t_0v_i\|-|t_1-t_0|v_i
\ge L-\frac{L-\epsilon}{v_n}v_i.
\]
Since $v_i\le v_{n-1}$ for all such $i$, we obtain
\[
\|t_1v_i\|\ge L-\frac{L-\epsilon}{v_n}v_{n-1}.
\]

By the assumption $v_n\ge \frac{L-\epsilon}{\epsilon}v_{n-1}$,
the right-hand side is at least $L-\epsilon$.
Therefore,
\[
\min_{1\le i\le n}\|t_1v_i\|\ge L-\epsilon,
\]
which implies $\ML(v_1,\ldots,v_n)\ge L-\epsilon$.
This concludes the proof.
\end{proof}

\par This lemma is useful in dealing with cases when $v_{n-1}$ is bounded by a constant. We can manually check a finite number of instances and invoke this lemma when the $n$-th speed is sufficiently large.

\begin{remark}
\label{rem1}
    However, for there to be a finite number of instances to check when $n - 1$ of the speeds are fixed, we need $\epsilon$ in Lemma \ref{lem3} to be non-trivially greater than $0$. If the $n - 1$ speeds form a tight speed set such that $\ML(v_1,\ldots,v_{n - 1}) = 1 / n$, then the Lemma cannot ensure a loneliness value of at least $1 / n$ since $\epsilon = 0$. On the other hand, if we assume the Amended Loneliness Spectrum Conjecture \ref{con2} holds true for $n - 1$ runners and $v_1,\ldots,v_{n - 1}$ do not form a tight speed set, then $\ML(v_1,\ldots,v_{n - 1})$ must be non-trivially greater than $1 / n$ due to the discrete nature of the $[1 / n,1 / (n - 1))$ region.
\end{remark}

In the same vein, we introduce a method for dealing with $2$ very fast runners.

\begin{lemma}[Two Very Fast Runners]
\label{lem4}
    Let $v_1 < v_2 < \cdots < v_n$ be positive integers $(n \geq 4)$ with $\ML(v_1,\ldots,v_{n - 2}) \geq L$. Then $\ML(v_1,\ldots,v_n) \geq \frac{1}{n}$ if
    $$L \geq \frac{1}{n}\left(1 + \frac{3v_{n - 2}}{v_{n - 1}}\right).$$
\end{lemma}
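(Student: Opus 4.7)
The plan is to start with a time $t^*$ realizing $\min_{i \leq n-2} \norm{t^* v_i} \geq L$ and then perturb it within the window $I = [t^* - r, t^* + r]$, where $r = (L - \tfrac{1}{n})/v_{n-2}$. Since each $t \mapsto \norm{tv_i}$ is $v_i$-Lipschitz and $v_i \leq v_{n-2}$ for $i \leq n-2$, the bound $\norm{tv_i} \geq L - v_{n-2} r = \tfrac{1}{n}$ holds throughout $I$, so the slow runners are taken care of automatically; the hypothesis rewrites as $|I| = 2r \geq 6/(nv_{n-1})$, which is exactly the slack needed to then fit the two fast runners.

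The first main step will be to locate a \emph{$v_{n-1}$-good} closed subinterval $J \subseteq I$ --- meaning $\norm{tv_{n-1}} \geq \tfrac{1}{n}$ for all $t \in J$ --- of length at least $2/(nv_{n-1})$. The $v_{n-1}$-good set on $\mathbb{R}$ is periodic with period $1/v_{n-1}$: closed good intervals $G_k$ of length $(n-2)/(nv_{n-1})$ alternate with open bad intervals $B_k$ of length $2/(nv_{n-1})$. I would case on the number of $B_k$'s meeting $I$. If at most one $B_k$ meets $I$, then $I$ minus its bad portion has length at least $2r - 2/(nv_{n-1}) \geq 4/(nv_{n-1})$ split into at most two good pieces, so the longer one has length at least $2/(nv_{n-1})$. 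If at least two $B_k$'s meet $I$, then two consecutive bad intervals $B_k, B_{k+1}$ both meet $I$ (otherwise every bad interval strictly between them would be fully contained in $I$, producing consecutive ones anyway), and the good gap $G_k$ between them sits fully inside $I$ with length $(n-2)/(nv_{n-1}) \geq 2/(nv_{n-1})$ for $n \geq 4$.

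With $J$ in hand, the second step is to produce $t \in J$ with $\norm{tv_n} \geq \tfrac{1}{n}$. The image of $J$ under $t \mapsto tv_n \bmod 1$ is either all of $\mathbb{R}/\mathbb{Z}$ or a closed arc of length at least $|J| v_n \geq 2v_n/(nv_{n-1}) > 2/n$, using $v_n > v_{n-1}$. Since the $v_n$-bad set $\{y \in \mathbb{R}/\mathbb{Z} : \norm{y} < \tfrac{1}{n}\}$ is an open set of length exactly $2/n$, the image cannot be contained in it, so it intersects the complementary good set. Pulling back yields $t \in J$ satisfying $\norm{tv_i} \geq \tfrac{1}{n}$ for all $1 \leq i \leq n$, giving $\ML(v_1,\ldots,v_n) \geq \tfrac{1}{n}$.

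The main obstacle I anticipate is the combinatorial step locating $J$; the hypothesis $2r \geq 6/(nv_{n-1})$ is used essentially tightly in the worst sub-case, where a single bad interval is roughly centered in $I$ leaving two narrow good flanks each of length exactly $2/(nv_{n-1})$, and some care is needed in handling partial bad intervals overlapping the boundary of $I$.
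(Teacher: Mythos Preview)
Your argument is correct. The high-level strategy matches the paper's: start at a time $t^*$ that is good for the first $n-2$ runners, then show that within a displacement of $3/(nv_{n-1})$ in time one can accommodate both fast runners, so the slow runners lose at most $3v_{n-2}/(nv_{n-1})$ from $L$. The execution, however, is organised differently. The paper argues \emph{algorithmically}: first move runner $n-1$ into the good region $[\tfrac{1}{n},\tfrac{n-1}{n}]$ (cost $\leq \tfrac{1}{n}$ in runner $n-1$'s position), then, since $v_n>v_{n-1}$ and the good region is longer than the bad one for $n\geq 4$, slide further in one of the two directions until runner $n$ becomes good while runner $n-1$ remains good (additional cost $\leq \tfrac{2}{n}$). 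You instead fix the symmetric window $I$ of full length $2r\geq 6/(nv_{n-1})$ at the outset and run a covering argument: locate a $v_{n-1}$-good subinterval $J\subseteq I$ of length $\geq 2/(nv_{n-1})$ via casework on how many bad intervals meet $I$, then use that the image of $J$ under $t\mapsto tv_n$ has length $>2/n$ and therefore cannot sit inside the bad arc. Your version is a bit more bookkeeping but also a bit more airtight: the paper's ``in one of the two directions'' step is exactly your observation that a $v_{n-1}$-good block has length $(n-2)/(nv_{n-1})\geq 2/(nv_{n-1})$, but you make the pigeonhole explicit rather than verbal. Either packaging yields the same bound with the same use of $n\geq 4$.
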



\begin{proof}
Let $\delta=\frac{1}{n}$. Choose $t_0$ such that $\|t_0v_i\|\ge L$ for all $1\le i\le n-2$.
We will find $t$ with $|t-t_0|\le \frac{3}{nv_{n-1}}$ such that $\|tv_{n-1}\|\ge \delta$ and $\|tv_n\|\ge \delta$.

First, since the invalid arc for runner $n-1$ has length $2\delta$, by moving time forward or backward we can reach the boundary of the valid region in time at most $\delta/v_{n-1}$.
Thus there exists $t_1$ with $|t_1-t_0|\le \frac{\delta}{v_{n-1}}$ and $\|t_1v_{n-1}\|=\delta$.
Fix the direction of time so that for $t\ge t_1$ we have $t v_{n-1}\bmod 1\in[\delta,1-\delta]$ for as long as possible; along this direction runner $n-1$ remains valid for at least $(1-2\delta)/v_{n-1}$ time.

If $\|t_1v_n\|\ge \delta$, set $t=t_1$ and we are done.
Otherwise, $t_1v_n\bmod 1$ lies in the invalid arc $(1-\delta,\delta)$, which has length $2\delta$.
As $t$ increases, $t v_n\bmod 1$ moves continuously at speed $v_n$, so within time at most $2\delta/v_n$ it hits the boundary of the invalid arc and hence becomes valid.
Since $v_n>v_{n-1}$, we have $2\delta/v_n\le 2\delta/v_{n-1}$, so runner $n-1$ stays valid during this adjustment.
Therefore there exists $t_2\ge t_1$ with
\[
|t_2-t_1|\le \frac{2\delta}{v_n}\le \frac{2\delta}{v_{n-1}}
\quad\text{and}\quad
\|t_2v_{n-1}\|\ge\delta,\ \|t_2v_n\|\ge\delta.
\]
Combining,
\[
|t_2-t_0|\le |t_2-t_1|+|t_1-t_0|\le \frac{3\delta}{v_{n-1}}=\frac{3}{nv_{n-1}}.
\]

Finally, for $1\le i\le n-2$ we have
\[
\|t_2v_i\|\ge \|t_0v_i\|-|t_2-t_0|\,v_i
\ge L-\frac{3}{nv_{n-1}}\,v_{n-2}.
\]
Hence
\[
\ML(v_1,\ldots,v_n)\ \ge\ \min\!\Big(\delta,\ L-\frac{3v_{n-2}}{nv_{n-1}}\Big).
\]
If $L\ge \delta\big(1+\frac{3v_{n-2}}{v_{n-1}}\big)$, the right-hand side is at least $\delta=1/n$, proving the lemma.
\end{proof}

\par This lemma proves useful when the two fastest runners share similar speeds such that we cannot apply Lemma \ref{lem3}.

\par For the sake of completeness, we also recall Kravitz's result for $n = 3$, which establishes the original Loneliness Spectrum Conjecture and therefore also implies the Amended Loneliness Spectrum Conjecture for $n = 3$.

\begin{theorem}[Kravitz \cite{kravitz}]
\label{the5}
    For any positive integers $v_1,v_2,v_3$, we have either
    $$\ML(v_1,v_2,v_3) = \frac{s}{3s + 1} \text{ for some } s \in \mathbb{N} \text{ or } \ML(v_1,v_2,v_3) \geq \frac{1}{3}.$$
\end{theorem}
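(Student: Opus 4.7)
The plan is to apply Lemma \ref{lem1} to localize the maximum at a rational time and then perform a case analysis on the denominator at the binding pair. Assume $L := \ML(v_1, v_2, v_3) < 1/3$ after normalizing $\gcd(v_1, v_2, v_3) = 1$. By Lemma \ref{lem1}, $L$ is attained at some $t^* = m/V$ with $V = v_i + v_j$; relabeling so this pair is $(1, 2)$, the identity $t^*(v_1 + v_2) \in \mathbb{Z}$ forces $\norm{t^* v_1} = \norm{t^* v_2} = L$, while $\norm{t^* v_3} \geq L$. Writing $L = a/V$ for a positive integer $a$, the hypothesis $L < 1/3$ gives $V \geq 3a + 1$, and the entire problem reduces to showing $V = 3a + 1$, which yields $L = a/(3a+1) = s/(3s+1)$ with $s = a$.

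When $V \geq 3a + 3$, the plan is to exhibit $m' \in \mathbb{Z}/V$ such that the time $t' = m'/V$ achieves loneliness at least $\floor{V/3}/V \geq (a+1)/V > L$, contradicting maximality of $L$. Since $\norm{t' v_1} = \norm{t' v_2}$ is automatic for $t' \in V^{-1}\mathbb{Z}$, it suffices to choose $m'$ with both $m' v_1 \bmod V$ and $m' v_3 \bmod V$ lying in the middle interval $[a+1, V - a - 1]$, which has length at least $a + 2$. A structural pigeonhole argument on $\mathbb{Z}/V$, exploiting the divisibility relations $\gcd(v_1, v_2) \mid a$ and $\gcd(v_1, v_2, v_3) = 1$ together with a bound on how often a large $\gcd(v_3, V)$ can obstruct the placement, produces such an $m'$.

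The main obstacle is the boundary case $V = 3a + 2$, where the middle interval $[a+1, 2a+1]$ has only $a + 1$ integers, so a naive pigeonhole on the preimages of this interval fails. Here I would exploit the sharper divisibility $\gcd(v_1, v_2) \mid \gcd(a, 3a+2) = \gcd(a, 2) \in \{1, 2\}$, which drastically restricts $\gcd(v_1, v_2)$, and then handle a finer case analysis: either construct an explicit improving $m'$ using the arithmetic-progression structure of $\{m' v_3 \bmod V : m' v_1 \equiv r \pmod V\}$ as $r$ ranges over the middle interval, or pivot to a different binding pair such as $(v_1, v_3)$ with sum $V' = v_1 + v_3$ and apply the $V' \geq 3a + 3$ argument against the new denominator $V'$. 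Carrying out this delicate interplay between the three candidate binding denominators, and systematically ruling out every degenerate subcase, is where the bulk of the technical work lies.
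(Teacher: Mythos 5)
First, a meta-point: although the surrounding text says ``we also include Kravitz's proof,'' the paper actually only \emph{states} Theorem \ref{the5} with a citation and never reproduces an argument, so there is no in-paper proof to compare against; the evaluation below is of your sketch on its own terms.

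Your high-level architecture is the right one and is the one suggested by the paper's own summary of Kravitz's intuition: localize $t^*$ to $m/V$ via Lemma \ref{lem1}, identify the binding pair so $\norm{t^*v_1}=\norm{t^*v_2}=L=a/V$, and then try to show one of the pair-denominators must equal $3a+1$. However, there are two genuine gaps. The first, which you do not flag, is that the ``structural pigeonhole'' you invoke for $V\geq 3a+3$ does not go through by counting alone. Writing $g_1=\gcd(v_1,V)$, $g_3=\gcd(v_3,V)$ and $I=[a+1,V-a-1]$, the number of $m'$ with $m'v_1\bmod V\notin I$ is $2a+g_1$ (using $g_1\mid a$), and the number with $m'v_3\bmod V\notin I$ is of comparable size $2a+O(g_3)$. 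The union bound therefore only guarantees a good $m'$ when $V>4a+g_1+g_3-2r$ (roughly), which is far stronger than $V\geq 3a+3$; for $V$ between $3a+3$ and roughly $4a$ a purely additive count can fail. Some additional structure — the arithmetic-progression description of the fibers $\{m'v_3\bmod V: m'v_1\equiv r\}$, the coprimality $\gcd(g_1,g_3)=1$, or a pivot to a different pair — is genuinely needed here, not just in the $V=3a+2$ case. The second gap you do acknowledge: the $V=3a+2$ subcase (and in fact the entire ``pivot to another binding pair and run the argument against $V'$'' strategy) is left as a plan rather than carried out, and this is where the actual content of Kravitz's theorem lives.

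A smaller but real concern: you reduce the theorem to ``show $V=3a+1$'' where $a=LV$, but the fraction $a/V$ need not be in lowest terms. If $d=\gcd(a,V)>1$, the conclusion you actually need is $V=3a+d$ (so that $L=(a/d)/(3(a/d)+1)$), which is weaker. You should either argue that $\gcd(a,V)=1$ is forced at a genuine local maximum with $L<1/3$, or rephrase the target. As written, the reduction is too strong and a proof of the literal claim ``$V=3a+1$'' might not exist.
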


Finally, before we dive into the main results of this paper, we introduce one more technique called \emph{pre-jump}, first employed by Bienia, Goddyn, Gvozdjak, and Tarsi \cite{BIENIA19981}.

\begin{lemma}[Pre-jump \cite{BIENIA19981}]
    Given $2$ positive integer speeds $v_1$ and $v_2$. If they share a common factor of $g$, then for any real number time $t$ and integer choices of $h$, we have
    $$\min(\norm{tv_1},\norm{tv_2}) = \min\left(\Norm{\left(t + \frac{h}{g}\right)v_1},\Norm{\left(t + \frac{h}{g}\right)v_2}\right).$$
\end{lemma}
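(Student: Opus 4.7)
The plan is to prove the stronger statement that each term $\norm{tv_i}$ individually (not merely the minimum) is invariant under the shift $t \mapsto t + h/g$. The claim about the minimum then follows immediately from taking the $\min$ of the two equalities.

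The key observation is that $\norm{\,\cdot\,}$ is by definition periodic modulo $1$, so $\norm{x + m} = \norm{x}$ for any integer $m$. Hence it suffices to check that $(h/g)v_i$ is an integer for $i \in \{1,2\}$. Since $g$ is a common factor of $v_1$ and $v_2$, we can write $v_i = g \cdot (v_i/g)$ with $v_i/g \in \mathbb{Z}$, and therefore
\[
\left(t + \frac{h}{g}\right)v_i \;=\; tv_i + h \cdot \frac{v_i}{g} \;\equiv\; tv_i \pmod{1}.
\]
Applying $\norm{\,\cdot\,}$ to both sides gives $\Norm{(t + h/g)v_i} = \norm{tv_i}$ for each $i$, and taking the minimum over $i \in \{1,2\}$ yields the lemma.

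Since the argument is a one-line application of the divisibility $g \mid v_i$ together with integrality-invariance of $\norm{\,\cdot\,}$, there is no real obstacle; the only subtlety worth flagging is that the lemma would \emph{fail} if we replaced $\min(\norm{tv_1},\norm{tv_2})$ with the minimum over a third speed $v_3$ that is not divisible by $g$, which is precisely why the ``pre-jump'' trick is effective only for shifting the common-factor pair while the other runners may move arbitrarily.
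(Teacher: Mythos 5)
Your proof is correct and uses exactly the same argument as the paper: for each $i$, write $(t + h/g)v_i = tv_i + h(v_i/g)$, note that $h(v_i/g) \in \mathbb{Z}$ by the divisibility $g \mid v_i$, invoke the integer-periodicity of $\norm{\cdot}$ to conclude $\Norm{(t + h/g)v_i} = \norm{tv_i}$, and take the minimum. The closing remark about why a third speed not divisible by $g$ would break the invariance is a nice observation, though not part of the paper's proof.
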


\begin{proof}
    We first note the fact that $\norm{x + k} = \norm{x}$ for any integer $k$ and $\norm{(t + h / g)v_1} = \norm{tv_1 + (h / g)v_1}$. From the assumption, we know that $v_1$ is divisible by $g$, hence the second part is an integer, attaining the equality $\norm{(t + h / g)v_1} = \norm{tv_1}$. The same argument can be applied for $v_2$, resulting in the min function taking in the same arguments on both sides of the equality.
\end{proof}

This pre-jump is especially effective when combined with the Pigeonhole Principle, as demonstrated in the following corollary:

\begin{cor}
Let $v_1,v_2,v_3$ be three positive integers with $\gcd(v_1,v_2,v_3)=1$. Assume the Lonely Runner Conjecture holds for $n=2$. If any two of the speeds share a common factor at least $2$, then the Lonely Runner Conjecture holds for these three integers.
\end{cor}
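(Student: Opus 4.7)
The plan is to combine the $n=2$ case of the Lonely Runner Conjecture with the pre-jump lemma applied to the common factor, and then carry out a pigeonhole argument on the unit circle to handle the third runner. Without loss of generality, assume that the two speeds sharing the common factor are $v_1$ and $v_2$, and write $g = \gcd(v_1, v_2) \geq 2$. Since $\gcd(v_1, v_2, v_3) = 1$, we have $\gcd(g, v_3) = 1$, which will be the key arithmetic input.

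First, I would invoke the Lonely Runner Conjecture for $n = 2$ applied to the pair $(v_1, v_2)$ to obtain a time $t_0$ with $\norm{t_0 v_1}, \norm{t_0 v_2} \geq 1/3$. Next, by the pre-jump lemma, for every integer $h$ one still has $\norm{(t_0 + h/g)v_1}, \norm{(t_0 + h/g)v_2} \geq 1/3 \geq 1/4$. Thus it suffices to exhibit some integer $h$ for which $\norm{(t_0 + h/g)v_3} \geq 1/4$, because then the time $t = t_0 + h/g$ will be a witness to $\ML(v_1, v_2, v_3) \geq 1/4$.

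The remaining step, which is the crux of the argument, is a pigeonhole calculation on $\mathbb{R}/\mathbb{Z}$. As $h$ ranges over $\{0, 1, \ldots, g - 1\}$, the quantity $(t_0 + h/g)v_3 = t_0 v_3 + h v_3 / g$ takes $g$ distinct values modulo $1$; using $\gcd(g, v_3) = 1$, the residues $h v_3 \bmod g$ permute $\{0, 1, \ldots, g-1\}$, so these values form $g$ equispaced points on the circle with gap $1/g$. The ``bad'' region $\{x : \norm{x} < 1/4\}$ is an open arc of length $1/2$. I would finish by observing that $g \geq 2$ equispaced points cannot all lie inside this open arc, since the shortest arc containing them has length $(g-1)/g \geq 1/2$, with equality only at $g = 2$ (and in that case the two points are exactly antipodal, so the bad arc being open still excludes both). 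Hence at least one value of $h$ places runner $3$ at distance $\geq 1/4$ from the origin, completing the proof.

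I do not anticipate a major obstacle here: the LRC for $n = 2$ is well known and already invoked, the pre-jump lemma has just been proved, and the pigeonhole step on the circle is elementary. The only point that demands a little care is the boundary case $g = 2$, where one must use the openness of the bad arc rather than a strict inequality on arc lengths.
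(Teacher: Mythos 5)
Your proposal is correct and follows essentially the same route as the paper's proof: invoke the $n=2$ Lonely Runner Conjecture to place runners $1$ and $2$ at distance $\geq 1/3$, apply the pre-jump lemma in increments of $1/g$ (which fixes runners $1$ and $2$ since $g \mid v_1, v_2$), use $\gcd(g,v_3)=1$ to show the $g$ candidate positions of runner $3$ are equispaced with gap $1/g$, and finish by pigeonhole. The paper phrases the final step as ``one of the $g$ points lies within $1/(2g)$ of the antipode $1/2$, hence $\norm{\cdot} \geq \tfrac{1}{2} - \tfrac{1}{2g} \geq \tfrac{1}{4}$,'' while you argue that $g \geq 2$ equispaced points cannot all fall in the open bad arc of length $1/2$; these are two renderings of the identical pigeonhole observation, and your careful handling of the boundary case $g=2$ is a small but welcome refinement.
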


This result is classical and has several simple proofs; see, e.g., Proposition 4.6 in \cite{jana}, or by Kravitz in \cite{kravitz}.  We reproduce one here to illustrate the pre-jump, which we will use extensively in later sections.

\begin{proof}
    Without the loss of generality, let $\gcd(v_1,v_2) = g \geq 2$. Since by assumption $\gcd(v_1,v_2,v_3) = 1$, we know that $g$ is co-prime to $v_3$. Applying the Lonely Runner Conjecture for $n = 2$, there exists a time $t$ such that
    $$\min(\norm{tv_1},\norm{tv_2}) \geq \frac{1}{3}.$$
    We now apply the pre-jump and define a new time
    $t' = t + \frac{h}{g}$
    for some integer $0 \leq h < g$. Since $v_3$ is co-prime to $g$, we know that
    $$\{hv_3 \bmod g : 0 \leq h < g\}$$ must form the 
    complete residue set. Thus the set of possible positions of $(t + h / g)v_3$ on the unit circle is $g$ equidistantly spaced points shifted by an arbitrary amount. By the Pigeonhole principle, at least one point is at most $1 / (2g)$ away from the $1 / 2$ midpoint. Hence, there must exist an integer $h$ such that
    $$\Norm{\left(t + \frac{h}{g}\right)v_3} \geq \frac{1}{2} - \frac{1}{2g} \geq \frac{1}{4}.$$
    We know that such a time $t' = t + h / g$ already guarantees a loneliness value of at least $1 / 3$ for the first two runners. Therefore, the overall loneliness value is at least $1 / 4$, as desired.
\end{proof}

\par We remark that a similar argument can even be applied for more runners, although the usage of the Pigeonhole Principle is much more intricate when there is more than one runner. We discuss this further in Section \ref{sec4}, and specifically Lemma \ref{lem2}.

\section{Achieving the exceptions}
\label{sec3}
\par One of our key results is disproving the original Loneliness Spectrum Conjecture through counterexamples. Since Kravitz proved the Loneliness Spectrum Conjecture for $n = 3$ \cite{kravitz}, we look at the next unproven case of $n = 4$. A bit of experimentation suggests an explicit construction of counterexamples for four moving runners that produces loneliness values of the form
$$\frac{s}{4s + 2}$$
for odd integer $s \geq 5$. Note that if $s$ is even, the fraction reduces to the non-exceptional form, so we only focus on the odd case.

\begin{lemma}
\label{lem6}
    For $n = 4$ and natural numbers $a$ and $b$ such that $a \neq b$ and $\gcd(a,b) = 1$, we define $v_1 = a$ and $v_i = (i - 1)a + b$ for $2 \leq i \leq 4$. Then,
    $$\ML(v_1,\cdots,v_4) \leq \frac{1}{4}.$$
\end{lemma}

\begin{proof}
    For the sake of contradiction, assume that there exists $t \in \mathbb{R}$ such that $\Norm{tv_i} > \frac{1}{4}$ for all $1 \leq i \leq 4$. In other words, runners $2$, $3$, and $4$ are all between the $(\frac{1}{4},\frac{3}{4})$ invalid region, which has a length of less than $\frac{1}{2}$. At the same time, the positions of these runners form an arithmetic progression with common difference $ta = tv_1$. This implies that,
    $$\norm{tv_1} = \norm{ta} \leq \frac{1 - \frac{2}{4}}{4 - 2} = \frac{1}{4},$$
    which contradicts our assumption that $\norm{ta} > \frac{1}{n} = \frac{1}{4}$.
\end{proof}

The above lemma motivates us to look at speeds that form an arithmetic progression sequence. We analyze one such family of speeds.

\begin{theorem}
    \label{the0}
    For $n = 4$ and integer $s \geq 0$, we have
    $$\ML(8,4s + 3,4s + 11,4s + 19) = \frac{2s + 7}{8s + 30} = \frac{2s + 7}{4(2s + 7) + 2}.$$
\end{theorem}

\begin{proof}
    \par By Lemma \ref{lem1}, any local maximum of $f(t)$ occurs at a time  $t = m/(v_i+v_j)$. Thus, the denominator of the maximizing time must be one of the pairwise speed sums:
    $$ v_1+v_2 = 4s+11,\;\; v_1+v_3 = 4s+19,\;\; v_1+v_4 = 4s+27,\;\; $$
    $$ v_2+v_3 = 8s+14,\;\; v_2+v_4 = 8s+22,\;\; v_3+v_4 = 8s+30.$$
    We further note that the denominators $4s + 11$ and $4s + 19$ are impossible because $v_3 = 4s + 11$ and $v_4 = 4s + 19$. 
 Invoking Lemma \ref{lem6} with $a = 8$ and $b = 4s - 5$ tells us that the loneliness value must be less than $1/4$. Hence, the remaining denominators, the largest achievable value $< 1/4$ is $\max\{(s + 6) / (4s + 27), (2s + 3) / (8s + 14), (2s + 5) / (8s + 22), (2s + 7) / (8s + 30)\} = (2s + 7) / (8s + 30)$. It remains to prove that this is always attainable. Using the Euclidean algorithm, we know that $\gcd(4s + 19,8s + 30) = 1$, i.e. $v_4 = 4s + 19$ and $8s + 30$ are co-prime. Therefore, there must exist an integer $u$ such that
    $$uv_4 \equiv (4s + 19)u \equiv 2s + 7 \pmod {8s + 30}$$
    Let $t = u / (8s + 30)$, then $\norm{tv_4} = (2s + 7) / (8s + 30)$. We now show that $\norm{tv_1}$, $\norm{tv_2}$, and $\norm{tv_3}$ are all at least $\norm{tv_4}$. We first focus on $tv_1 = (8u) / (8s + 30)$, deriving the following equation
    $$uv_1 \equiv 8u \equiv (8 + (8s + 30))u \equiv (8s + 38)u \equiv 2(4s + 19)u \equiv 2(2s + 7)\pmod {8s + 30}.$$
    Similarly for $uv_2$, we get
    $$uv_2 \equiv (4s + 3)u \equiv (4s + 19)u - 16u \equiv (2s + 7) - 4(2s + 7) \equiv -3(2s + 7) \pmod {8s + 30}.$$
    Finally, for $uv_3$, we get
    $$uv_3 \equiv (4s + 11)u \equiv (4s + 19)u - 8u \equiv (2s + 7) - 2(2s + 7) \equiv -(2s + 7) \pmod {8s + 30}.$$
    We recall that $\norm{x} = \norm{-x}$, so we can take the absolute value of these values. We are left with
    $$\min\left(\norm{tv_1},\norm{tv_2},\norm{tv_3},\norm{tv_4}\right) = \min\left(\Norm{\frac{2(2s + 7)}{8s + 30}},\Norm{\frac{3(2s + 7)}{8s + 30}},\Norm{\frac{(2s + 7)}{8s + 30}},\Norm{\frac{(2s + 7)}{8s + 30}}\right).$$
    Straightforward comparisons show that all the arguments are at least $(2s + 7) / (8s + 30)$, as desired
    
\end{proof}

 \par We further note that we never assumed $s \geq 0$. By setting $s = -1$. we get the construction for $\ML(8, -1, 7, 15) = \ML(8, 1, 7, 15) = \frac{5}{22}$. Finally, we have not found any instances of $\ML = \frac{3}{14}$. $\ML = \frac{1}{6}$ does not exist as it contradicts the Lonely Runner Conjecture's $\frac{1}{5}$ lower-bound.
\par Another peculiar result is that we have only found instances of loneliness value $s / (4s + k)$ for $k \in \{1,2\}$ in our experiments. Under the Conjecture, there could potentially be instances of $k \in \{3,4\}$, but none are present in our experiments where all speeds are at most $400$. Thus, we propose the following Conjecture.
\begin{conj}
\label{con3}
    Given $4$ positive integer speeds $v_1,v_2,v_3,v_4$, we have either
    $$\exists s,k \in \mathbb{N},k \in \{1,2\}, \ML(v_1,\ldots,v_4) = \frac{s}{4s + k} \text{ or } \ML(v_1,\ldots,v_4) \geq \frac{1}{4}.$$
\end{conj}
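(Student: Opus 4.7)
The plan is to reformulate the conjecture via Lemma~\ref{lem1}: if $\ML(v_1,v_2,v_3,v_4) = L < 1/4$, then $L = r/(v_i+v_j)$ for some witness pair $(i,j)$ realizing the local maximum, and writing $L = s/(4s+k)$ in lowest terms (so $\gcd(s,k)=1$ and $k \geq 1$), one needs to rule out $k \in \{3,4\}$, assuming the Amended Conjecture~\ref{con2} handles $k \geq 5$. The coprimality of $s$ and $4s+k$ forces $(4s+k) \mid (v_i+v_j)$, so $v_i+v_j = d(4s+k)$ and $r = ds$; the witness runners then sit at the mirror positions $\pm s/(4s+k) \bmod 1$, and the remaining two runners lie in the valid band $[s/(4s+k),\,1 - s/(4s+k)]$, which has length $(2s+k)/(4s+k) > 1/2$ when $k \geq 3$.

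The first step would be to establish a reduction to bounded speed ratios. Combining Lemma~\ref{lem3} with Theorem~\ref{the5} forces $v_4/v_3$ to be bounded, since for $v_4 \gg v_3$ one obtains $\ML \geq 1/4 - \epsilon$; by Remark~\ref{rem1} and the discreteness of the $n=3$ spectrum, $\epsilon$ can be taken genuinely bounded away from $0$ unless $(v_1,v_2,v_3)$ is itself the tight triple $(1,2,3)$ (up to scaling), which must be handled separately but whose structure is explicit enough for a direct analysis. A parallel use of Lemma~\ref{lem4} bounds $v_3/v_2$, and the pre-jump corollary reduces the cases where two speeds share a large common factor. Together these should shrink the problem to a small computationally tractable family, extending the authors' $\leq 200$ verification to a complete check.

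The genuine difficulty lies in the generic cases that survive these reductions. The authors' Section~\ref{sec3} construction excludes $k \geq 3$ in that specific family by pairing Corollary~\ref{cor1} with its $1/2$-shifted version Corollary~\ref{cor2}, exploiting a parity argument rooted in the tight equality $\ML(1,2,3) = 1/4$. For $k=3$ one would want an analogous $1/3$-shifted identity, but the asymmetry between $1/4$ and $1/3$ frustrates a direct analog; for $k=4$ the divisibility $4 \mid (v_i+v_j)$ offers no comparable structural handle in the 3-runner subproblem. The hard part, in short, is locating the right algebraic obstruction for $k \in \{3,4\}$, and I expect a full proof of Conjecture~\ref{con3} would require either a substantially new identity beyond the $\|tv_i\|$-manipulations used here, or a deep structural classification of four-runner witness configurations; the fact that the authors pose this only as a conjecture supported by computer search suggests that no direct adaptation of the techniques developed in this paper suffices.
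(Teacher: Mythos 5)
The statement you were asked to prove is a conjecture that the paper leaves open: the authors supply no proof, only a computer search over speed sets with entries at most $200$, and in Section~\ref{sec5} they state explicitly that they lack a convincing argument for the restriction on $k$. Your proposal correctly recognizes this and is an honest research sketch rather than a proof; your closing diagnosis --- that the shift identities behind Corollaries~\ref{cor1} and~\ref{cor2} exploit a parity structure tied to $1/4$ and the tight triple $(1,2,3)$, with no clean $k=3$ or $k=4$ analog --- is, as far as one can judge from the paper, the genuine obstruction.

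Two technical remarks on the sketch. The observation that the valid band $[\,s/(4s+k),\,1-s/(4s+k)\,]$ has length $(2s+k)/(4s+k) > 1/2$ holds for every $k \geq 1$, not only $k \geq 3$, so it offers no leverage against $k \in \{3,4\}$ in particular. And the reduction to bounded speed ratios via Lemma~\ref{lem3}, Lemma~\ref{lem4}, Remark~\ref{rem1}, and the pre-jump corollary is sound in outline --- it mirrors the strategy of Section~\ref{sec4} --- but at best it reduces the conjecture to a finite computation whose size is not controlled, which is essentially the situation the paper's $\leq 200$ search already occupies. The gap you identify is real, and the paper does not close it either.
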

We remark that this incomplete appearance of the values of $k$ is not unique to $n = 4$, but some variant of it seems to also be true for at least $n = 5$ and $n = 6$. We explore this more in Section \ref{sec5}

\section{Large common factors}
\label{sec4}
In this section, we study the loneliness value when the speeds share a sufficiently large common factor. Specifically, we illustrate that in the case of four lonely runners satisfying certain conditions on common factors, the setup inherently produces a loneliness value of at least $\frac{1}{4}$. 

\subsection{More techniques}
\par We first introduce a few more tools. All the results within this section are concerned with the region above $1/n$ in the Spectrum Conjecture. Therefore, we aim for inequalities of the form $\ML(v_1, \ldots, v_n) \geq 1/n$ instead of the usual $1/(n+1)$ as in the original Conjecture.
\begin{theorem}
\label{the1}
    For $n \geq 4$, let $v_1, v_2, \ldots, v_n$ be positive integers with $\gcd(v_1,\ldots, v_n) = 1$ and $\gcd(v_1,\ldots,v_{n-1}) = g \geq 2$ such that the Lonely Runner Conjecture holds for $v_1, \ldots, v_{n-1}$. Then
    $$\ML(v_1,\ldots,v_n) \geq \frac{1}{n}.$$
\end{theorem}

\begin{proof}
    Since we assume that the $n$ speeds do not share a common factor, $v_{n}$ and $g$ are co-prime. The Lonely Runner Conjecture for $n-1$ runners dictates that there exists a time $t$ such that
    $$\min(\|tv_1\|,\ldots,\|tv_{n-1}\|) \geq \frac{1}{n}.$$
    Subsequently, we apply the pre-jump, defining a new time
    $$t' = t + \frac{h}{g}$$
    Which fixes the positions of the first $n-1$ runners while moving the $n$-th runner in $\frac{1}{g}$ intervals. By the Pigeonhole Principle, there exists an integer $h$ such that
    $$\left\lVert\left(t + \frac{h}{g}\right)v_n\right\rVert \geq \frac{1}{2} - \frac{1}{2g} \geq \frac{1}{n}$$
    Thus we conclude that $\ML(v_1,\ldots,v_n) \geq \frac{1}{n}$.
\end{proof}

For the rest of the paper, we refer to Theorem \ref{the1} only for $n = 4$. The theorem allows us to restrict our focus speeds with at most two runners sharing a common factor. Another useful lemma that we will repeatedly utilize throughout the paper applies when two speeds share a common factor $g \geq 6$ and satisfy a couple more conditions. This lemma is very powerful when combined with pre-jumps.
\begin{lemma}
    \label{lem2}
    Let $0\le a,b\le 1$ be real numbers and let $v_1,v_2,g$ be positive integers. If $g \geq 6$, $\gcd(v_1,g) = 1$, $v_2 \not\equiv 0 \bmod g$, and $v_1 \neq \pm v_2 \bmod g$, then there exists integer $0 \leq h < g$ such that
    $$\min\left(\left\lVert\frac{h}{g}v_1 + a\right\rVert,\left\lVert\frac{h}{g}v_2 + b\right\rVert\right) \geq \frac{1}{4}.$$
\end{lemma}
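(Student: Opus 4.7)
The plan is to recast Lemma~\ref{lem2} as a discrete covering question on $\mathbb{Z}/g\mathbb{Z}$ and control two ``bad'' sets by counting, together with a case analysis driven by $c\equiv v_2 v_1^{-1}\pmod{g}$. First I would use $\gcd(v_1,g)=1$ to substitute $k\equiv hv_1\pmod{g}$, which is a bijection of $\mathbb{Z}/g\mathbb{Z}$. Since $g\nmid v_2$, the residue $c$ lies in $\{1,2,\ldots,g-1\}$, and the lemma reduces to finding $k\in\mathbb{Z}/g\mathbb{Z}$ with $\|k/g+a\|\geq 1/4$ and $\|ck/g+b\|\geq 1/4$.

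Next I would estimate the bad sets $A=\{k:\|k/g+a\|<1/4\}$ and $B=\{k:\|ck/g+b\|<1/4\}$. Since $k/g\bmod 1$ visits $g$ equally spaced points, $|A|\leq\lfloor g/2\rfloor+1$, simply counting lattice points in an open arc of length $1/2$. For $B$, letting $d=\gcd(c,g)$, the map $k\mapsto ck\bmod g$ is $d$-to-$1$ onto the multiples of $d$, so at most $\lfloor g/(2d)\rfloor+1$ of those multiples fall in an arc of length $g/2$; hence $|B|\leq d(\lfloor g/(2d)\rfloor+1)\leq g/2+d$. The target is $|A\cup B|<g$, which by inclusion–exclusion reduces to producing enough overlap $|A\cap B|$ to absorb the slack in $|A|+|B|$.

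Then I would split on $d=\gcd(c,g)$. When $d\geq 2$, the coarseness of $B$ (supported on the multiples of $d$) interacts rigidly with the consecutive-arc structure of $A$: a consecutive arc of size at most $\lfloor g/2\rfloor+1$ together with a $d$-periodic bad set cannot cover $\mathbb{Z}/g\mathbb{Z}$ for $g\geq 6$, and only a short boundary check is needed. When $d=1$, both $A$ and the preimage $c^{-1}B$ (that is, the analogue of $A$ with $b$ in place of $a$, pulled back by multiplication by $c^{-1}$) are consecutive arcs of $\mathbb{Z}/g\mathbb{Z}$, and two such arcs of size at most $\lfloor g/2\rfloor+1$ cover the group only for a very specific alignment; the hypothesis $a\neq\pm b\bmod{g}$ is designed precisely to forbid that alignment, and I would unpack it to exhibit a lattice point in the complement of $A\cup B$.

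The main obstacle will be the tight subcase $c\equiv\pm 1\pmod{g}$, in which $B$ is literally a cyclic shift or a reflection of $A$ and $|A|+|B|$ can reach $g+1$; there the conclusion rests entirely on the hypothesis creating a half-period mismatch between the two arcs, and that is where I expect the bulk of the case work. Handling the open/closed boundary behaviour when $|A|$ or $|B|$ attains its extremal value $\lfloor g/2\rfloor+1$ (that is, when $a$ or $b$ sits exactly at a $1/4$-type boundary) will also need care. Once these subcases are dispatched, the assumption $g\geq 6$, and in particular $\lfloor g/2\rfloor\geq 3$, makes the remaining arithmetic go through.
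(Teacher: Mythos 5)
Your reduction --- substituting $k\equiv hv_1\pmod g$ and looking for $k$ with $\|k/g+a\|\geq 1/4$ and $\|ck/g+b\|\geq 1/4$, $c\equiv v_2v_1^{-1}$ --- is exactly the paper's first step. But the rest of your plan diverges in a way that leaves the central case unproved, and part of the problem is a misreading of the hypothesis.

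First, the hypothesis. The condition ``$a\neq\pm b\bmod g$'' is a typo for $v_1\not\equiv\pm v_2\pmod g$; you can see this from the proof in the paper (``by assumption, $z\neq\pm 1$'') and from the way the lemma is invoked in Theorem \ref{the2}. After your change of variables it reads $c\not\equiv\pm 1\pmod g$. This matters: the ``tight subcase $c\equiv\pm 1$'' that you flag as the bulk of the work is simply excluded by hypothesis. Moreover, the literal reading cannot be right: take $g=8$, $c=1$, $a=1/16$, $b=9/16$; then $A=\{6,7,0,1\}$, $B=\{2,3,4,5\}$ cover $\mathbb{Z}/8$, yet $a\neq\pm b$. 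So no argument can be made to run on the literal hypothesis, and the ``alignment'' you hope it forbids is not forbidden by it.

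Second, the genuine gap is the case $d=\gcd(c,g)=1$ with $2\leq|c|\leq g-2$, which is in fact the generic and hardest case. There $B$ is \emph{not} an arc: only $cB$ is an arc, while $B=c^{-1}(cB)$ is an arithmetic progression with step $c^{-1}$. Your inclusion--exclusion plan ($|A|+|B|\leq g+1$, so show $|A\cap B|\geq 2$) names the target but gives no mechanism for it, and the ``two arcs of size $\leq\lfloor g/2\rfloor+1$'' picture does not apply to $A\cup B$. The paper's actual mechanism is a sweeping/intermediate-value argument: $A^c$ is a block of at least $\lfloor g/2\rfloor$ \emph{consecutive} residues, so $cA^c$ is an AP of step $c$ whose total span is $|c|\bigl(\lfloor g/2\rfloor-1\bigr)/g\geq 2\bigl(\lfloor g/2\rfloor-1\bigr)/g\geq 1/2$ for $g\geq 6$, while each individual step has length $|c|/g\leq 1/2$; a monotone walk of span $\geq 1/2$ and step $\leq 1/2$ cannot avoid a closed arc of length $1/2$, which forces a term of $cA^c$ into $(cB)^c$. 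That is the idea your sketch is missing. (Your $d\geq 2$ branch is fine and does not even need the hypothesis: $B^c$ is $(g/d)$-periodic and nonempty, and the arc $A^c$ has length $\lfloor g/2\rfloor\geq g/d$, so it contains a full period and hence meets $B^c$.)
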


\begin{proof}
    The pre-jump effectively transforms the problem into modular arithmetic for modulo $g$. Let $v_1 \equiv x \bmod g$ and $v_2 \equiv y \bmod g$. Since $v_1$ is co-prime to g, there is a modular inverse $u$ such that $ux \equiv 1 \bmod g$. We define $uy \equiv uv_2 \equiv z \bmod g$, then by assumption, $z \neq \pm 1$, and $z \neq 0$ since $v_2$ is not divisible by $g$.

    We increment $h$ by $u$ at a time. Let $h_1$ be some time such that $\left((h_1v_1)/g + a \bmod 1 \right) \geq \frac{1}{4}$ but $h_0 = h_1 - u$ would push runner one into the invalid region, i.e. $< 1/4$. Note that we intentionally left out the norm sign as we want the signed value in $\mathbb{R}/\mathbb{Z}$. We define $h_i = h_1 + u k$ for integers $i > 1$. Then, since $u v_1 \equiv 1 \bmod g$, as we increase $i$ by one, $\left((h_2v_1)/g + a \bmod 1\right)$ increases by $\frac{1}{g}$. We eventually reach some $k + 1$ such that $\left((h_{k+1}v_1)/g + a \bmod 1 \right) > \frac{3}{4}$ and runner one lands in the invalid region. In the worst case, there are at least $\Floor{\frac{1}{2} / \frac{1}{g}} = \Floor{\frac{g}{2}}$ of these $h_i$ values satisfy the conditions, meaning that $k \geq \Floor{\frac{g}{2}}$. 
    
    We now show that at least one of these $k$ values of $h_i$ also satisfy the inequality for runner two. We first assume for convenience that $uy \equiv z \bmod g$ is at most $g / 2$. Otherwise, we can negate both $v_2$ and $b$ while preserving all of our conditions. When $h = h_1$, runner two is either already within the valid region, in which case we're done, or somewhere within the invalid region, which has a length strictly less than $1/2$. Since $z \leq g / 2$, as we increment $i$, runner two can never skip over the entirety of the valid region. Furthermore, considering runner two's $g$ possible equidistant positions on the unit circle. Since $z \not\equiv \pm 1$, runner two must jump by at least two positions at a time. The invalid region contains at most $g - \floor{g/2}$ points. Hence, runner two should pass by at least $2k - 1$ valid points as we iterate from $h_1$ to $h_k$. Substituting $k \geq \Floor{g / 2}$, we derive the inequality
    $$2k - 1 > g - \floor{g/2} \implies 3\Floor{g / 2} - 1 > g$$
    which is true for all $g \geq 6$.
\end{proof}

\subsection{Proving three shifted runners}
A nice bonus that we get from applying the aforementioned methods is proving the Shifted Lonely Runners Conjecture for $n=3$. The existing proof by Cslovjecsek involves a setup of geometric lattices and the covering radius of a polytope \cite{jana}. We instead invoke only elementary arithmetics. This theorem actually comes in handy later in our other proofs as well.





\begin{theorem}
\label{the2}
    Given distinct positive integers $v_1, v_2, v_3$ with $\gcd(v_1,v_2,v_3) = 1$ and arbitrary starting points $s_1, s_2, s_3 \in \mathbb{R}$, there is a real number $t$ such that for all $1 \leq j \leq 3$, the distance of $s_j + tv_j$ to the nearest integer is at least $\frac{1}{4}$.
\end{theorem}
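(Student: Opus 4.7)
The proposed strategy mirrors the pre-jump--plus--base-case template already used in the paper. First, because the map $(s_1, s_2, s_3) \mapsto \max_t \min_j \|s_j + tv_j\|$ is continuous in the shifts, it suffices to verify the bound $\geq 1/4$ on the dense set of rational shifts; clearing a common denominator then reduces the whole problem to an integer Diophantine question where the pre-jump and pigeonhole tools introduced earlier apply.

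The easier situation is when some pair of speeds, WLOG $v_1$ and $v_2$, share a common factor $g \geq 2$. The hypothesis $\gcd(v_1, v_2, v_3) = 1$ forces $\gcd(v_3, g) = 1$. I would apply the shifted Lonely Runner Conjecture for $d = 2$ (an elementary fact giving loneliness $\geq 1/3$) to $(v_1, v_2)$ with shifts $(s_1, s_2)$ to obtain a base time $t_0$ with $\|s_j + t_0 v_j\| \geq 1/3$ for $j = 1, 2$. The pre-jump lemma preserves these two inequalities under $t_0 \mapsto t_0 + h/g$ for every integer $h$, while the quantities $s_3 + (t_0 + h/g) v_3 \bmod 1$ sweep out $g$ equispaced points on $\mathbb{R}/\mathbb{Z}$. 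Since the good region $\{x : \|x\| \geq 1/4\}$ has measure $1/2$, a direct pigeonhole argument (or Lemma \ref{lem2} once $g \geq 6$) supplies an admissible $h$, and this $t_0 + h/g$ certifies the theorem in this case.

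The remaining case, which I expect to be the main obstacle, is when $v_1, v_2, v_3$ are pairwise coprime so that pre-jump has nothing to bite on. The plan here is to invoke Lemma \ref{lem2} with the auxiliary modulus $g$ taken to be one of the speeds themselves---say $v_3$, assumed (after relabeling and possibly appealing to Lemma \ref{lem3} when one speed dominates) to be the largest. First pick $t_0$ placing runner $3$ in its good region; every time of the form $t_0 + h/v_3$ preserves this. Then apply Lemma \ref{lem2} with $(v_1, v_2, g) = (v_1, v_2, v_3)$ and shifts $a = s_1 + t_0 v_1$, $b = s_2 + t_0 v_2$: pairwise coprimality gives $\gcd(v_1, v_3) = 1$ and $v_3 \nmid v_2$, while $v_3$ being largest forces $v_1 \not\equiv \pm v_2 \pmod{v_3}$ away from the degenerate locus $v_3 = v_1 + v_2$. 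The lemma then produces the desired $h$.

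The delicate step---and where the main obstacle sits---is discharging the residual boundary configurations: pairwise coprime triples with $v_3 \leq 5$, or triples with $v_3 = v_1 + v_2$. These form a finite list (after normalizing via Lemma \ref{lem3} for very fast runners and the gcd normalization), and I would treat them either by explicit construction of a suitable $t$ or by a small computer-assisted enumeration over rational shifts. Ensuring that the hypotheses of Lemma \ref{lem2} line up simultaneously across all these cases, and that the continuity reduction truly produces the $\geq 1/4$ bound (rather than $> 1/4$) in the limit, is the step requiring the most careful bookkeeping.
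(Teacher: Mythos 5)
Your common‑factor case lines up with the paper's (pre‑jump by $h/g$ plus a one‑runner pigeonhole to place runner $3$; Schoenberg's trivial bound $\ge 1/(2d)=1/4$ suffices there, so the stronger $\ge 1/3$ shifted bound you invoke is unneeded). The disagreement, and the genuine gap, is in the pairwise‑coprime case.

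You choose the pre‑jump modulus $g=v_3$ (park runner $3$ and then apply Lemma~\ref{lem2} to runners $1,2$). This fails exactly on the locus you identify, $v_1\equiv\pm v_2\pmod{v_3}$, which for $v_1<v_2<v_3$ reduces to $v_3=v_1+v_2$. But that exceptional family is \emph{infinite}, not finite: $(1,2,3),(1,3,4),(2,3,5),(1,4,5),(1,5,6),(2,5,7),(3,4,7),\dots$ are all pairwise coprime with $v_3=v_1+v_2$. Your plan to truncate it via Lemma~\ref{lem3} (one very fast runner) cannot work here, because along this family $v_3/v_2<2$ is bounded, so the largest speed is never ``very fast'' relative to the rest; and in any case Lemma~\ref{lem3} as stated is for unshifted runners, so a shifted analogue would first have to be established. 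The continuity reduction to rational shifts is correct but does not help with this: the infinite family persists after clearing denominators.

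The paper sidesteps the issue with a different reduction: instead of parking runner $3$, it restricts to times where runners $2$ and $3$ sit at mirror positions, i.e.\ $(tv_2+s_2)+(tv_3+s_3)\in\mathbb{Z}$, so the two constraints $\|tv_2+s_2\|\ge 1/4$ and $\|tv_3+s_3\|\ge 1/4$ collapse to one. These times form an arithmetic progression with step $1/(v_2+v_3)$, and Lemma~\ref{lem2} is applied with modulus $g=v_2+v_3$. With that choice, $v_1\equiv\pm v_2\pmod{v_2+v_3}$ would force $v_1=v_2$ or $v_1=v_3$ (since $0<v_1,v_2<v_2+v_3$), both impossible, and $\gcd(v_2,v_2+v_3)=\gcd(v_2,v_3)=1$ is automatic from pairwise coprimality. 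So the only residual case is $v_2+v_3<6$, which under $v_1<v_2<v_3$ forces $(1,2,3)$ alone, handled by a short ad hoc pigeonhole. To repair your argument you would either need to switch to the modulus $v_2+v_3$ as the paper does, or supply a separate, uniform argument for the entire $v_3=v_1+v_2$ family (not merely a finite check).
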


\begin{proof}
    Suppose that two of the speeds, say $v_i$ and $v_j$, share a common factor $g>1$, and let $v_k$ denote the remaining third speed. It is known that for $d$ moving runners with arbitrary starting positions, the loneliness value is at least $1/(2d)$ \cite{SCHOENBERG1976263} (see also Theorem~4 in \cite{BeckHostenSchymura2019}). In particular, for $d=2$ there exists a time $t$ such that
    \[
    \min\bigl(\|tv_i+s_i\|,\ \|tv_j+s_j\|\bigr)\ge \tfrac14.
    \]
    We now apply the pre-jump and define a new time $t' = t + \frac{h}{g}$ for some integer $h$.

    Since $g$ and $v_k$ are co-prime by assumption, the new time fixes runner $i$ and $j$ while moving runner $k$ in $\frac{1}{g}$ intervals. Finally, by the Pigeonhole Principle, there exists an integer $h$ such that
    $$\Norm{\left(t + \frac{h}{g}\right)v_k + s_k} \geq \frac{1}{2} - \frac{1}{2g} \geq \frac{1}{4}$$
    We conclude that the Shifted Lonely Runner Conjecture holds if there exist two speeds that share a common factor.
    
    Now we focus on the case when the speeds are pairwise co-prime. We restrict the positions of runners $2$ and $3$ by only focusing on time values $t$ such that
    $$(tv_2 + s_2) + (tv_3 + s_3) = n$$
    for some integer $n$. Intuitively, this restricts runners $2$ and $3$ to always be symmetric in terms of their positions, so we can effectively ignore one of the runners (in this case runner $3$). More formally
    $$(tv_2 + s_2) \equiv -(tv_3 + s_3) \bmod 1$$
    Simplifying the original condition gives
    $$t(v_2 + v_3) = (n - s_2 - s_3) \implies t = \frac{(n' - s_2 - s_3)}{v_2 + v_3} + \frac{h}{v_2 + v_3}$$
    for some integer $h$ and $n = n' + h$. Without the loss of generality, let $v_1 < v_2 < v_3$. Consequently, $v_1 \neq \pm v_2 \bmod {v_2 + v_3}$. So if $v_2 + v_3 \geq 6$, the conditions of Lemma \ref{lem2} are satisfied with $g = v_2 + v_3$, yielding a loneliness value of at least $\frac{1}{4}$. It remains to deal with the case when $v_2 + v_3 < 6$. We note that the only triple of speeds that fits this description is $(1,2,3)$. Using the same restriction method again, but this time on runners $1$ and $3$, the formula produces time values of the form
    $$t = \frac{n - s_1 - s_3}{4} + \frac{h}{4}.$$
    We note that the $t$ values for $h = 2$ and $h = 1$ are exactly $\frac{1}{4}$ apart, meaning the positions of runner $2$ at those times are $\frac{1}{4} \cdot 2 = \frac{1}{2}$ apart, thus one of them must produce a loneliness value of at least $\frac{1}{4}$ for runner $2$ using the same Pigeonhole principle argument, which we denote as $h'$. Finally, we can tweak $h'$ by choosing whether or not to add $2$ to $h'$. This does not change runner $2$'s position because $2 \cdot \frac{2}{4} = 1$, but it does change runner $1$'s position by $1 \cdot \frac{2}{4} = \frac{1}{2}$, so one of $h'$ and $h' + 2$ must produce a loneliness value of at least $\frac{1}{4}$ for runners $1$ and $3$, as desired. We conclude that the loneliness value is at least $\frac{1}{4}$.
\end{proof}

\subsection{Large common factor between two speeds}
In the previous subsections, specifically Theorem \ref{the1}, we proved that when three of the speeds share a common factor, the loneliness values fall within the ``non-discrete'' region of at least $1/4$ predicted by the Loneliness Spectrum Conjecture. In this section, we further prove the loneliness value is at least $\frac{1}{4}$ if any two speeds share a common factor $g > 3$. 

\begin{theorem}
    \label{the3}
    Let $v_1, v_2, v_3, v_4$ be positive integers with $\gcd(v_1,v_2,v_3,v_4) = 1$. Then if any two of the speeds share a common factor $g > 3$, we have
    $$\ML(v_1,v_2,v_3,v_4) \geq \frac{1}{4}.$$
\end{theorem}

\begin{proof}
    Without the loss of generality, let runners $1$ and $2$ share a common factor $g > 3$. Furthermore, if runners $3$ and $4$ share a common factor with $g$, then Theorem \ref{the1} guarantees a loneliness value of at least $1/4$, so we assume $v_3$ and $v_4$ are both not divisible by $g$.
    
    At a high level, we roughly follow the three strategies below for dividing the casework:
    \begin{enumerate}[label=(\roman*)]
        \item For $g \geq 6$, we can first apply the pre-jump fixing runners $1$ and $2$. We then invoke Lemma \ref{lem2} for runners $3$ and $4$ under modulo $g$ to finish the proof. This comes with the caveat that $v_3 \not\equiv v_4 \pmod g$.
        \item If $v_3 \equiv v_4 \pmod g$, then $v_3$ and $v_4$ move in lock-step under modulo $g$. We reformulate the problem with three new speeds $(v_1, v_2, |v_3 - v_4|)$ starting at $(0,0,0.5)$. The theorem for the three Shifted Lonely Runners allows us to satisfy runner $1$ and $2$ while keeping runner $3$ and $4$ close to each other. Finally, we use the pre-jump to find some appropriate $h / g$ values for runners $3$ and $4$.
        \item For $g \in \{4, 5\}$, the grid is too coarse for the pre-jump, so we adopt ad-hoc strategies instead. Specifically for $g = 5$, we leverage lemmas \ref{lem3} and $\ref{lem4}$ for One and Two Very Fast Runners by bounding $v_1$ and $v_2$.
    \end{enumerate}

    More formally, we look at the following cases:
    \begin{itemize}
        \item $v_3 \neq \pm v_4 \bmod g$ and $g \geq 6$. We satisfy runners $1$ and $2$ with the $2$-runner setup. There exists some $t$ such that
        $$\min(\norm{tv_1},\norm{tv_2}) \geq \frac{1}{3} > \frac{1}{4}.$$
        We apply the pre-jump and select times of the form
        $$t' = t + \frac{h}{g}$$
        for some integer $h$. In this case, the requisite conditions are met for Lemma \ref{lem2} where $g = g$, so there exists integer $h$ such that
        $$\min\left(\Norm{\left(t + \frac{h}{g}\right)v_3},\Norm{\left(t + \frac{h}{g}\right)v_4}\right) \geq \frac{1}{4}.$$
        And due to the nature of the pre-jump, we have $\norm{(t + h / g)v_1} = \norm{tv_1}$ and $\norm{(t + h / g)v_2} = \norm{tv_2}$, both of which are at least $\frac{1}{4}$ by how we chose the value for $t$. Hence, we conclude that the loneliness values are at least $\frac{1}{4}$.
        \item $v_3 \equiv \pm v_4 \bmod g$, $|v_3 - v_4| \not\in \{v_1, v_2\}$, and $g > 3$. We focus on times of the form
        $$t' = t + \frac{h}{g}$$        
        We assume $v_3 \equiv v_4 \pmod g$ because we can otherwise flip the sign of $v_4$ to attain the desired conditions. We note that since the two runners move at the same speed modulo $g$, the distance between the two runners remains constant regardless of the value of $h / g$. For $g$ equidistant residue positions on the unit circle that are arbitrarily shifted, the position between $\frac{1}{4}$ and $\frac{3}{4}$ in the valid region with the smallest value is at most $\frac{1}{4} + \frac{1}{g}$. Otherwise, we can subtract $\frac{1}{g}$ to obtain a smaller reachable position. This implies that we can always fit a contiguous region between $\frac{1}{4}$ and $\frac{3}{4}$ where the smaller end starts at one of the $g$ positions if its length $l$ meets the following inequality
        $$\frac{1}{4} + \frac{1}{g} + l \leq \frac{3}{4}.$$
        Rearranging the terms gives us
        $$l \leq \frac{1}{2} - \frac{1}{g}.$$
        In other words, we need to find some time $t$ such that
        $$\Norm{t\left(v_3 - v_4\right)} \leq \frac{1}{2} - \frac{1}{g},$$
        which means that both runners $3$ and $4$ are $\norm{t(v_3 - v_4)}$ apart but are both still in the valid region. Note the usage of the less than or equal sign, as opposed to the usual greater than or equal sign. We can convert it to the familiar form by shifting by $0.5$
        $$\Norm{t\left(v_3 - v_4\right) + 0.5} \geq \frac{1}{2} - \left(\frac{1}{2} - \frac{1}{g}\right) = \frac{1}{g}.$$
        Since $|v_3 - v_4| \neq v_1$ or $v_2$, we can use theorem \ref{the2}, the theorem for three Shifted Lonely Runner for $3$ moving runners with speeds $(v_1,v_2,|v_3 - v_4|)$ and starting positions are $(0,0,0.5)$. This gives us a time $t$ such that runners $1$ and $2$ are at least $\frac{1}{4}$ away from the origin, and runners $3$ and $4$ are at most $\frac{1}{4}$ apart, which is less than or equal to $\frac{1}{2} - \frac{1}{g}$ as desired. 
        
        \item $v_3 \equiv \pm v_4 \pmod g$, $|v_3 - v_4| \in \{v_1, v_2\}$, and $g \geq 6$. Unfortunately, the proof for the case above breaks down when $|v_3 - v_4| = v_3$. Without the loss of generality, let $|v_3 - v_4| = v_1$. The shifted setup now has speeds $(v_1, v_2, v_1)$ with positions $(0, 0, 0.5)$. However, since runners $1$ and $3$ have the same velocities, they will always be diametrically opposed to each other. Consequently, the loneliness value is at most $1/4$ even if we ignore runner $2$ when runners $1$ and $3$ are at $1/4$ and $3/4$. Thus, we give up on $1/4$ and settle for $\frac{1}{g}$ instead. More formally, we show that there exists some real number $t$ such that
        \begin{align*}\min(\norm{tv_1},\norm{tv_2}) \geq \frac{1}{4} \quad \text{and} \quad \norm{tv_1 + 0.5} \geq \frac{1}{g}.\end{align*}
        
        \par We assume the new speeds $(v_1, v_2, |v_3 - v_4|)$ or $(v_1, v_2, v_1)$ have already been normalized, i.e., divided by their common factors, which in this case means $\gcd(v_1,v_2) = 1$. At time $t = \frac{1}{4v_1}$, we have $\min(\Norm{tv_1},\Norm{tv_1 + 0.5}) = \frac{1}{4}$. Now, if $v_1 > 1$, we can again apply the pre-jump with $t' = t + \frac{h}{v_1}$. By the Pigeonhole Principle, there exists an integer $h$ such that $\Norm{\left(t + h / v_1\right)v_2} \geq \frac{1}{4}$, as desired. The other case then is when $v_1 = 1$, a.k.a. speeds in the form of $(v_1,v_2,|v_3 - v_4|) = (1,v_2,1)$. If $v_2$ is not a multiple of $4$, $t = \frac{1}{4}$ yields the desired loneliness value. Otherwise, we assume that $v_2$ is a multiple of $4$. If we satisfy the conditions for runners $1$ and $3$ by setting $t = \frac{1}{4}$, runner $2$ would still be at the origin $0$ and can reach the $\frac{1}{4}$ mark within $\frac{1}{4} / v_2$ units of time. Setting $t = 1/4 + 1/(4v_2)$ allows runner $3$ to at least remain 
        $$\Norm{\left(\frac{1}{4} + \frac{1}{4v_2}\right) \cdot 1 + 0.5} = \frac{1}{4} - \frac{1}{4v_2} \geq \frac{1}{4} - \frac{1}{16} \geq \frac{1}{g}$$
        away from the origin. Hence, we conclude that the loneliness value is always at least $\frac{1}{4}$ if $g > 5$. Now, we handle $g = 5$ and $g = 4$ separately.
    \end{itemize}

    \begin{itemize}
        \item $g = 4$: First, the only possible pairs of remainders for $v_3$ and $v_4 \bmod g$ are $(1,1)$, $(1,3)$, $(3,1)$, or $(3,3)$ since $g = \gcd(v_1, v_2)$ can be assumed to be co-prime to $v_3$ and $v_4$ in view of Theorem \ref{the1}. All of these satisfy $v_3 \equiv \pm v_4 \bmod g$, so we will borrow much of the same proof from the previous discussion. Our second case already covers $|v_3 - v_4| \neq v_1$ or $v_2$. Thus, the only case we must take care of is when $|v_3 - v_4| = v_1$. The previous proof already gives a loneliness of at least $\frac{1}{4}$ if $v_1 = |v_3 - v_4| \geq 2g$ (when $v_1 > 1$ before normalizing the speeds in the third case), so we assume $v_1 \leq g$. Since $v_1$ is divisible by $g$, it has to be $g = 4$, so we focus only on speeds satisfying the following conditions
        \begin{align*}(v_1,v_2,|v_3 - v_4|) = (g,v_2,g) \quad \text{and} \quad v_2 \equiv 0 \bmod {g}.\end{align*}
        Transforming this setup back to the original sets of speed, we have the quadruple $(v_1,v_2,v_3,v_4) = (4,4a,b + 4,b)$ for some positive integers $a$ and $b$. If $b + 4 \geq 6$, we select times of the form
        $$t = \frac{k}{v_3}$$
        for some integer $k$. Since $v_1 = 4$, it is co-prime to $v_3 = b + 4$, thus Lemma \ref{lem2} applies (Note that because there are no shifts in this scenario, the case $v_1 \equiv \pm v_2 \bmod {v_3}$ doesn't actually matter as they would always share these same positions, so we can simply ignore one of the players), implying there exists some integer $k$ such that $\min\left(\Norm{\frac{k}{v_3}v_1},\Norm{\frac{k}{v_3}v_2}\right) \geq \frac{1}{4}$. We define this time as $t = \frac{k}{v_3}$. The intuition behind this construction is that runner $3$ stays at the origin regardless of $t$, thus when we do the pre-jump of $t' = t + \frac{h}{4}$, there are three valid $h$ values for runner $3$: $1,2,3$. Since the positions are evenly spread out, there must be at least $4 / 2 = 2$ valid $h$ values for runner $4$. Because $2 + 3 = 5 > 4$, by the Pigeonhole Principle, at least one of the $h$ values is valid for both runners $3$ and $4$, and, due to the definition of $t$ and the pre-jump, also valid for the first two runners. Hence, we conclude that for $v_3 \geq 6$, the loneliness value is at least $\frac{1}{4}$. The only case not covered is when $v_3 < 6$, which is only possible for the following speeds
        $$(v_1,v_2,v_3,v_4) = (4,4a,5,1).$$
        The loneliness value between $(1,4,5)$ is $\frac{1}{3}$, so by applying Lemma \ref{lem3} with $L = \frac{1}{3}$ and $\epsilon = \frac{1}{12}$, we have $\ML(4,4a,1,5) \geq \frac{1}{3} - \frac{1}{12} = \frac{1}{4}$, as desired for $a > 3$. We manually verify the results for $\ML(4,8,1,5)$ and $\ML(4,12,1,5)$ to cover the rest.

        \item $g = 5$: First, we focus on when $v_3 \equiv \pm v_4 \bmod g$. The second case suffices when $|v_3 - v_4| \not\in \{v_1, v_2\}$. Thus, we assume $|v_3 - v_4| = v_1 = g$ and $v_2$ is a multiple of $g$. Dividing the speeds by $g$, we get the following triple of speeds
        $$(v_1,v_2,|v_3 - v_4|) / 5 = (1,k,1)$$
        for some integer $k$. When $k$ is sufficiently large, we can use a very similar strategy as Lemma \ref{lem3} (Note that the two scenarios are not completely equivalent as two runners need to be $\frac{1}{4}$ away while the last runner needs to be $0.3$ away) to prove there exists time $t$ that guarantees $|v_3 - v_4| \leq 0.3$ if $k > 8$. We omit the exact details here as they are very redundant, but intuitively when the second runner is fast enough, we can satisfy runners $1$ and $3$ and perturb the time slightly to also satisfy runner $2$. Thus, we treat the case for when $k \leq 8$. Transforming this setup back to the original sets of speed, we have the quadruple $(v_1,v_2,v_3,v_4) = (5,5k,b + 5,b)$ for some positive integer $b$. Finally, when $b$ is sufficiently large, we can apply Lemma \ref{lem4} (Two Very Fast Runners) with $L = \ML(5,5k) \geq \frac{1}{3}$ to attain a loneliness value of at least $\frac{1}{4}$ for some sufficiently large $b$. We have to at most check for $b$ up to $1/(1/3 - 1/4)(5 \cdot 8) \cdot 3 = 1440$. This finally reduces the uncovered quadruple of speeds to a finite set, which we manually verified with a computer-assisted algorithm.
        \par It remains to prove the case when $v_3 \neq \pm v_4 \bmod g$. Without the loss of generality, we assume $v_3 > v_4$. We observe that Lemma \ref{lem2} only fails to yield the desired loneliness for $g = 5$ if only $2$ out of the $5$ values of $h$ satisfy
        $$\Norm{\frac{h}{g}v_3 + a} \geq \frac{1}{4}.$$
        If there are $3$ valid positions, then runner $3$ would have $3$ points in the valid region. Since the other runner is at least twice as fast as runner $3$, it would pass by $2 \cdot 3 - 1 = 5$ points, so it must step into the valid region at some point. Thus, we just need to prove there exists time $t$ such that $\min(\norm{tv_1},\norm{tv_2}) \geq \frac{1}{4}$ and the inequality $\Norm{\left(\frac{h}{5} + t\right)v_1} \geq \frac{1}{4}$ has at least $3$ valid $0 \leq h < 5$ solutions. We achieve this by looking for times of the form
        $$t = \frac{2k + 1}{4v_3} = \frac{k}{2v_3} + \frac{1}{4v_3}.$$
        for some integer $k$ that would satisfy runners $1$ and $2$. For $h = 0$, this fixes runner $3$'s positions at either $\frac{1}{4}$ or $\frac{3}{4}$, both yielding the desired $3$ valid solutions for $h$. We know that from Lemma \ref{lem2}, where we apply $g = 2v_3$, there exists some integer $k$ that satisfies runners $1$ and $2$ as long as $2v_3 = g \geq 6$. However, the lemma also requires that $2v_3$ is co-prime with at least one of $v_1$ or $v_2$, which is not guaranteed. We can remedy this by dividing $2v_3$ by its greatest common divisor with one of the speeds, which we now demonstrate must be less than $4$. If the greatest common factor between $v_3$ and $v_1$ or $v_2$ is $4$ or $\geq 6$, then we have already proven those cases in the previous discussion. If the greatest common factor is $5$, then $v_3$, $v_1$, and $v_2$ would share a common factor of $5$, yielding a loneliness value of at least $\frac{1}{4}$ by Theorem \ref{the1}. So the greatest common factors between $v_3$ and $v_1$ or $v_2$ is $1$, $2$, or $3$. We further note that $v_1$ and $v_2$ can at most each take one of the $2$ or $3$ factor, otherwise they would have a gcd of at least $6$, which again is already covered. Thus $$\min(\gcd(2v_3,v_1),\gcd(2v_3,v_2)) \leq \min(3,2 \cdot 2) \leq 3.$$
        We apply Lemma \ref{lem2} with the reduced modulus 
        $$g' = \frac{2v_3}{\min(\gcd(2v_3,v_1),\gcd(2v_3,v_2))} \geq \frac{2v_3}{3} \geq 6$$
        which holds if $v_3 \geq 9$. Thus, Lemma \ref{lem2} should suffice for $v_3 \geq 9$. We now do casework based on the value of $v_3$ for $1 \leq v_3 < 9$:
        \begin{itemize}
            \item[--] $v_3 = 8$: Since the factor of $3$ can't divide $8$, at most one of the speeds can share a factor of $2$, implying that $v_3$ is coprime with at least one of the speeds, i.e. 
            $$\min(\gcd(2v_3, v_1), \gcd(2v_3, v_2)) = 1,$$ satisfying the condition that $2v_3 = 16 \geq 6$.
            \item[--] $v_3 = 7$: $v_3$ must be co-prime to both speeds, satisfying the condition that $2v_3 = 14 \geq 6$.
            \item[--] $v_3 = 6$: The possible pairs of $(v_3,v_4)$ are $(6,1)$ -- falls into the proven $v_3 \equiv \pm v_4 \bmod g$ case; $(2,6)$ and $(4,6)$ -- since neither $v_1$ nor $v_2$ can share the factor of $2$, and since at most one speed shares the factor of $3$, the other speed has to be co-prime, satisfying the condition that $2v_3 = 12 \geq 6$; $(3,6)$ -- the same logic as the previous case. The other two speeds can't share the factor of $3$, so one of the speeds must be co-prime to $6$, satisfying the condition that $2v_3 = 12 \geq 6$; $(5,6)$ -- the three speeds $v_1$, $v_2$, and $v_4$ would share a common factor of $5$, which is covered by Theorem \ref{the1}.
            \item[--] $v_3 = 5$: This would lead to three speeds sharing a common factor of $5$, which is covered by Theorem \ref{the1}.
            \item[--] $v_3 = 4$: At most one of $v_1$ and $v_2$ can share the factor of $2$, so the other speed must be co-prime to $v_3$, satisfying the condition that $2v_3 = 8 \geq 6$.
            \item[--] $v_3 = 3$: Here, the proof gets a bit tricky. While one of the speeds must be co-prime to $3$, the other speed can be a multiple of $3$ without incurring the already proven cases of $g = 4$ or $g \geq 6$. We first remark that $(v_4,v_3)$ can't be $(2,3)$ as it falls into the proven $v_3 \equiv \pm v_4$ case, so it can only be $(1,3)$ as $v_4 < v_3$. Having $1$ is very crucial, and we reserve the proof in the explanation for $v_3 = 2$ below.
            \item[--] $v_3 = 2$: Since $v_3 > v_4$, the only possible set of speeds is $(v_4,v_3) = (1,2)$. Thus for both $v_3 = 2$ or $3$, we can guarantee $v_4 = 1$. Here, we abandon our original goal of ensuring there are $3$ valid positions in the context of pre-jumping with $h / 5$,
            and instead, we try to ensure there are $3$ valid positions for runner $4$, who has the speed $1$. Direct computation reveals that this is true for 
            $$tv_4 = t \in \left[\frac{1}{20},\frac{3}{20}\right] \bmod {\frac{1}{5}}.$$
            More formally, such a $t$ ensures there are $3$ valid $h$ solutions for
            $$\Norm{\left(\frac{h}{5} + t\right)v_4} \geq \frac{1}{4}$$
            as desired. We remark that the constraint on $t$ can actually be rewritten as
            $$\norm{5t} \geq \frac{1}{4}.$$
            In other words, this is a valid instance of the Lonely Runner Conjecture through the following setup
            $$\exists t \in \mathbb{R},   \min(\Norm{tv_1},\Norm{tv_2},\Norm{5t}) \geq \frac{1}{4} \iff \ML(v_1,v_2,5) \geq \frac{1}{4},$$
            which is true by the Lonely Runner Conjecture with $n=3$. 
            
            \item[--] $v_3 = 1$: By assumption $v_3 > v_4$, so this case cannot happen.
            
        \end{itemize}
        
    \end{itemize}
    At last, we have discussed all of the cases and can conclude that if any two speeds share a common factor $g > 3$, the loneliness value is at least $\frac{1}{4}$.
\end{proof}

\subsection{Almost very lonely runners when $g = 3$}
\label{sec44}
Experimental evidence suggests that a similar result of $\ML \geq \frac{1}{4}$ should be achievable as with the case of the previous section for $g > 3$. However, there is one glaring family of speeds that defy such a conclusion, particularly those of the form
$$(v_1,v_2,v_3,v_4) = (1,2,3,12k)$$
for some integer $k$. One might recognize the pattern within this construction. It utilizes a tight set of speed for $3$ runners $\ML(1,2,3) = \frac{1}{4}$ and forces a lower loneliness value by finding value $v_4$ such that $\Norm{tv_4} = 0$ at the ``local maximum times'', which in this case relies on $v_4 = 12x$. However, this family seems to also be the only exception to the rule. Therefore, in this subsection, we prove the following theorem.

\begin{theorem}
    \label{the4}
    Let $v_1 < v_2 < v_3 < v_4$ be positive integers with $\gcd(v_1,v_2,v_3,v_4) = 1$. Then if any two of the speeds share a common factor $g = 3$,
    $$\ML(v_1,v_2,v_3,v_4) \geq \frac{1}{4}$$
    unless $(v_1,v_2,v_3) = (1,2,3)$ and $v_4 = 12k$ for some natural number $k$.
\end{theorem}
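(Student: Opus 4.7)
The strategy adapts the case analysis of Theorem \ref{the3} but must avoid Lemma \ref{lem2}, which requires $g \geq 6$; I instead combine pre-jumps with the Loneliness Spectrum for $n = 3$ (Theorem \ref{the5}), Lemma \ref{lem3} (one very fast runner), Theorem \ref{the2} (Shifted LRC for three runners), and Lemma \ref{lem4} (two very fast runners).

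First I make the standard reductions. By Theorem \ref{the1} I may assume exactly two speeds are divisible by $3$; denote them $v_a, v_b$, and the remaining pair $v_c, v_d$, each coprime to $3$. Since $\pm 1$ are the only residues coprime to $3$, we have $v_c \equiv \pm v_d \pmod 3$, and after negating $v_d$ if necessary (which preserves $\norm{\cdot}$) I arrange $v_c \equiv v_d \pmod 3$, so $v_d - v_c$ is divisible by $3$. Using LRC for two runners, pick $t_0$ with $\norm{t_0 v_a}, \norm{t_0 v_b} \geq 1/3 \geq 1/4$, and consider pre-jumps $t = t_0 + h/3$ for $h \in \{0, 1, 2\}$: these fix $v_a, v_b$ and shift both $v_c$ and $v_d$ by equal increments of $1/3$ on the unit circle. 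A short pigeonhole argument shows that some $h$ places both $v_c$ and $v_d$ in $[1/4, 3/4]$ whenever $\norm{t_0(v_d - v_c)} < 1/6$, since the intersection of the two individual valid arcs then has length $> 1/3$, which cannot avoid any of the three equidistant shift positions.

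I dichotomize on $\ML(v_1, v_2, v_3)$ using Theorem \ref{the5}. In the branch $\ML(v_1, v_2, v_3) \geq 2/7$, Lemma \ref{lem3} with $\epsilon = 1/28$ finishes whenever $v_4 \geq 7 v_3$. For $v_4 < 7 v_3$, I apply the Shifted LRC (Theorem \ref{the2}) to $(v_a, v_b, |v_d - v_c|)$ with shifts $(0, 0, 1/2)$, obtaining $t_0$ satisfying the first two conditions and $\norm{t_0(v_d - v_c)} \leq 1/4$. The remaining $1/4$-to-$1/6$ gap is closed by subdividing on whether $|v_d - v_c|$ coincides with $v_a$ or $v_b$ (direct construction, mirroring the $g \geq 6$ case in Theorem \ref{the3}) and by invoking Lemma \ref{lem4} when $v_d$ is sufficiently large; a finite residue of small-speed configurations is verified manually. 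In the tight branch $\ML(v_1, v_2, v_3) = 1/4$, Theorem \ref{the5} forces $(v_1, v_2, v_3) = (g, 2g, 3g)$ for some $g \geq 1$. If $3 \mid g$, three speeds share factor $3$ and Theorem \ref{the1} applies; if $\gcd(g, 3) = 1$, the factor-$3$ pair must be $(v_3, v_4)$ with $v_4 = 3m$ and $\gcd(m, g) = 1$. At the tight-achieving time $t = 1/(4g)$ the first three runners are each exactly $1/4$ from the origin, and a direct calculation gives $\norm{tv_4} = \norm{3m/(4g)} \geq 1/4$ unless $12g \mid v_4$. The coprimality $\gcd(m, g) = 1$ forces $g = 1$ in this last case, producing exactly the exception $(1, 2, 3, 12k)$; for $g > 1$ coprime to $3$, a brief counting argument over the $2g$ candidate local-maximum times $t = (2k+1)/(4g)$ yields some $k$ with $\norm{tv_4} \geq 1/4$.

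The principal obstacle is bridging the $1/4$-to-$1/6$ gap in the non-tight branch, which requires the same intricate casework style as Theorem \ref{the3} together with manual verification at small speeds; a secondary hurdle is rigorously invoking the classification of three-runner tight sets as $(g, 2g, 3g)$, which the dichotomy from Theorem \ref{the5} only enforces up to that scaling. Showing the exception is genuine relies on the observation that $\min(\norm{t}, \norm{2t}, \norm{3t})$ attains its maximum $1/4$ only at $t \equiv 1/4 \pmod{1/2}$, where $\norm{12k\,t} = 0$; therefore $\ML(1, 2, 3, 12k) < 1/4$, in fact of the form $s/(4s+2)$, matching the construction of Section \ref{sec3}.
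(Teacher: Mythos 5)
Your proposal takes a genuinely different decomposition from the paper: a dichotomy on $\ML(v_1,v_2,v_3)$ via Theorem~\ref{the5}, using the Shifted Lonely Runner (Theorem~\ref{the2}) in the non-tight branch, whereas the paper goes directly into an elaborate modular analysis of the scaled speeds $a = v_1/3$, $b = v_2/3$, $c = |v_3 - v_4|/3$, doing residue casework modulo $a+b$ (or modulo a reduced $d$), deriving explicit numerical bounds ($a+b\geq 18$, $c\leq 25$, $d<36$, etc.), and then closing out with the very-fast-runner lemmas and a finite computer check. The problem is that your route has a load-bearing gap at exactly the point you flag as the ``principal obstacle.'' The pre-jump pigeonhole with step $1/3$ needs $\norm{t(v_d - v_c)} \leq 1/2 - 1/g = 1/6$, but Theorem~\ref{the2} applied to $(v_a, v_b, |v_d-v_c|)$ with shifts $(0,0,1/2)$ gives only $\norm{t(v_d-v_c)}\leq 1/4$. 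For $g\geq 4$ (the regime of Theorem~\ref{the3}), $1/2 - 1/g \geq 1/4$, so the Shifted LRC bound is automatically sufficient; for $g=3$ it is not. Hence ``mirroring the $g\geq 6$ case in Theorem~\ref{the3}'' cannot close the gap, because that case never had a gap to close. The subdivision on whether $|v_d-v_c|\in\{v_a,v_b\}$ only addresses the degenerate triples where Theorem~\ref{the2} cannot be invoked at all; in the generic subcase you are still left $1/4$-vs-$1/6$ short, and bridging this is precisely the nontrivial content of the paper's proof.

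A secondary gap: your tight branch invokes the classification of $n=3$ tight sets as scalings of $(1,2,3)$, but Theorem~\ref{the5} only constrains the set of achievable loneliness values, not which speed tuples achieve $1/4$; that classification needs its own justification (it is true, and stated informally in the paper's Section~4.4 discussion, but never proved there and not usable as a black box from Theorem~\ref{the5}). The paper's own proof sidesteps this by never needing the classification inside the argument --- it reduces to bounded parameters and a finite check, and only afterwards explains heuristically why $(1,2,3,12k)$ had to slip through. Your treatment of the exception itself (the computation $\norm{3m(2k+1)/(4g)}$ and the conclusion $g=1$, $4\mid m$) is correct, and your use of Lemma~\ref{lem3} with $\epsilon = 1/28$ in the $\ML \geq 2/7$ branch is a clean observation; but as written the proposal does not constitute a proof because the central $1/6$ bound is unestablished.
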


\begin{proof}
    We first forego the assumption that the speeds are strictly increasing. Then without the loss of generality, let $\gcd(v_1,v_2) = g = 3$. If either $v_3$ or $v_4$ share a common factor with $g$, then Theorem \ref{the1} dictates the loneliness value must be at least $\frac{1}{4}$. Hence, we focus on when $v_3$ and $v_4$ are both co-prime to $g$. Because $g = 3$, the following is always true,
    $$v_3 \equiv \pm v_4 \bmod g.$$
    And per usual, we focus on $v_3 \equiv v_4 \bmod g$ since the other case can be achieved by simply flipping signs. Then, the pre-jump of $t' = t + \frac{h}{g}$ would always fix the distance between runners $3$ and $4$. Thus, like with the previous proofs, we need to bound the distance
    $$\norm{t(v_3 - v_4)} \leq \frac{1}{2} - \frac{1}{g} = \frac{1}{6}.$$
    In other words, we need to find a time $t$ such that
    \begin{align*}\min(\norm{tv_1},\norm{tv_2}) \geq \frac{1}{4} \quad \text{and} \quad \norm{t(v_3 - v_4)} \leq \frac{1}{6}.\end{align*}
    Here, we once again have two cases due to the caveat of the distinct speed requirements for the Shifted Lonely Runner problem: (1) $|v_3 - v_4| \not\in \{v_1, v_2\}$ and (2) $|v_3 - v_4| \in \{v_1,v_2\}$.

    \par \textbf{Case 1:} $|v_3 - v_4| \not\in \{v_1,\, v_2\}$: We notice that $v_1$, $v_2$, and $|v_3 - v_4|$ are all multiples of $g$, so for the sake simplicity, we define $a = v_1 / g$, $b = v_2 / g$, and $c = |v_3 - v_4| / g$. There are two sub-cases:
    \begin{itemize}
        \item $c \leq a + b$: We consider runners $1$ and $2$, focusing on times of the form
        $$t = \frac{lu}{a + b}$$
        for some integers $l$ and $u$ such that $au = 1 \pmod {a + b}$. The following proof shares many similarities with Lemma \ref{lem2}, except, in this case the range of values within the valid region for $luc$ is only $\frac{a + b}{3}$ long instead of $\frac{a + b}{2}$. Here we again do casework on the value of $z = uc \pmod {a + b}$. We focus on $z \leq \frac{a + b}{2}$ since the remaining case is symmetric. Furthermore, we define $m = a + b$.
        \begin{itemize}
            \item [--] $z = 0$: In this case, $\|t(v_3 - v_4)\| = 0 \leq \frac{1}{6}$, so it is equivalent to the runners running by themselves.
            \item [--] $z = 1$: This case is impossible since $c \leq a + b$ and $c \neq a$ or $b$, resulting in a contradiction $au \equiv cu \equiv 1 \pmod{a+b}$. Note that if $c > a + b$, this is no longer true.
            \item [--] $2 \leq z \leq \frac{m}{3}$: For this interval, it is impossible for $c$ to skip the valid region of $\frac{m}{3}$ length. Runners $1$ and $2$ cover a range of at least $\frac{\floor{\frac{m}{2}} - 1}{m}$. Since $z \geq 2$, the third runner with speed $|v_3 - v_4|$ or $z$ under modulo $a + b$ is at least twice as fast, so it covers a region of $2 \cdot \frac{\floor{\frac{m}{2}} - 1}{m}$. By the Pigeonhole Principle, if this region is at least $\frac{2}{3}$, since the stride $z$ is at most $\frac{m}{3}$, it must pass through the $\frac{1}{3}$ valid region at some point. Thus we attain the following inequality
            $$2 \cdot \frac{\floor{\frac{m}{2}} - 1}{m} \geq \frac{2}{3}.$$
            Solving gives that all values of $m = a + b \geq 8$ satisfy the inequality.
            \item [--] $\frac{m}{3} < z < \frac{m}{2}$: We denote $L_0$ as the set of values of integer $p$ such that $-\frac{m}{6} \leq p \leq \frac{m}{6} \bmod m$. This is the set of positions that we wish $lz$ would land in. We then consider the set of positions $L_1$ such that one more step of size $z$ would cause the runner to land in $L_0$. More formally, it is
            $$L_1 = L_0 - z \bmod m.$$
            Note that since $z > \frac{m}{3}$, $L_0 \cap L_1 = \emptyset$. Moreover, we have $|L_0| = |L_1| \geq \floor{\frac{m}{3}}$. There are at least $\floor{\frac{m}{2}}$ number of valid $l$ positions such that $\frac{m}{4} \leq (lu)a = l(ua) = l \leq \frac{3m}{4}$, meaning that runners $1$ and $2$ are in the valid region. As long as one of the first $\floor{\frac{m}{2}} - 1$ positions of $(lu)c = l(uc) = lz$ is in $L_0$ or $L_1$, all the desired conditions are satisfies. By the Pigeonhole Principle, we derive the following inequality
            $$\Floor{\frac{m}{2}} - 1 > m - |L_0| - |L_1| \implies \Floor{\frac{m}{2}} - 1 > m - 2 \cdot \Floor{\frac{m}{3}},$$
            which is true for all $m \geq 18$.
            \item [--] $z = \frac{m}{2}$: Either $l = \floor{\frac{g + 1}{2}}$ or $l = \floor{\frac{g + 1}{2}} + 1$ should produce the desired $-\frac{1}{6} \leq lz \leq \frac{1}{6}$ value, since runner $3$ basically oscillates between the $0$ and $\frac{1}{2}$ point.
        \end{itemize}
        Thus combining all three cases, we see that when $a + b \geq c$, the desired conditions are always satisfied as long as $a + b \geq 18$. This bounds two of the speeds to be between $0 \leq a,b < 18 \implies 0 \leq v_1,v_2 < 54$. Without the loss of generality, we assume $v_3 < v_4$, so if $v_3$ is very large, we can apply Lemma \ref{lem4} ``Two Very Fast Runners'' with $L \geq 1/3$ (trivial bound from the $(v_1, v_2)$ two-runner setup) and $v_3 \geq 1/(1/3 - 1/4) \cdot 3 \cdot 54 = 1944$. Otherwise, if $v_3$ is small, but $v_4$ is very large, we can apply Lemma \ref{lem3} ``One Very Fast Runner'' with $v_1, v_2 \leq 54$, $v_3 \leq 1944$, $L = \ML(v_1, v_2, v_3)$ and some corresponding $\epsilon$ for each $L$ (this turns out to be quite a lot of cases to simulate, but we can skip most of them with our previous reductions. In addition, $\ML(v_1, v_2, v_3)$ is generally not super tight). This leaves us with a finite number of cases to check, which we manually verified with a computer algorithm. We note that this method of bounding $2$ speeds and using the ``One/Two Very Fast Runner(s)'' lemmas is quite prevalent in the latter part of the discussion as well. We will henceforth omit the details after bounding $2$ speeds for the sake of minimizing redundancy.
        \item We now consider the case when $c \geq a + b$. We want $\norm{tc} \leq \frac{1}{6}$, so we might as well set it to $0$ by focusing on times of the form $t = \frac{k}{c}$ for some integer $k$. Hence we need to find $k$ such that
        $$\min\left(\Norm{\frac{k}{c}a},\Norm{\frac{k}{c}b}\right) \geq \frac{1}{4}.$$
        It is tempting to apply Lemma \ref{lem2} with $g = c$, but we cannot ensure either $a$ or $b$ is co-prime to $c$. We can resolve this by dividing $c$ by one of $a$ or $b$'s  greatest common divisor. More formally,
        $$\frac{c}{\min(\gcd(a,c),\gcd(b,c))} \geq 6.$$
        We first rewrite the equation as below
        \begin{align*}c = A \cdot \gcd(a,c) \quad \text{and} \quad c = B \cdot \gcd(b,c).\end{align*}
        Observe that $\gcd(a,c)$ is coprime to $\gcd(b,c)$ since $\gcd(a,b) = 1$. This implies $B$ must be a multiple of $\gcd(a,c)$, thus $B \geq \gcd(a,c)$. Hence,
        $$c = A \cdot \gcd(a,c) \leq A \cdot B.$$
        Assuming the conditions for Lemma \ref{lem2} is not met, then $\max(A,B) \leq 5$, so the inequality above further implies
        $$c \leq A \cdot B \leq 5 \cdot 5 = 25.$$
        This again gives us a bound for $a$ and $b$ as $a + b \leq 25$, so we use the same argument as the prior proof with ``Very Fast Runners'' Lemmas for large speeds and manually verify a finite number of small cases, this time with $v_1, v_2 \leq 75$ instead of $54$.
    \end{itemize}

    \par \textbf{Case 2:} $|v_3 - v_4| \in \{v_1,\, v_2\}$. For the sake of simplicity, let $|v_3 - v_4| = v_1$, and $v_3 > v_4$. The central challenge/motivation for this case is that it is impossible to ensure $\norm{t(v_3 - v_4)} = \norm{tv_1} \leq \frac{1}{6}$ and $\norm{tv_1} \geq \frac{1}{4}$ at the same time since they are directly contradictory. Thus, we alter our approach and fix the position of runner $3$ between $[-\frac{1}{12},\frac{1}{12}] \bmod {\frac{1}{3}}$. This is equivalent to $\norm{t(3v_3) + 0.5} \geq \frac{1}{4}$. Under this constraint, there are always $2$ valid positions out of the $3$ from $(t + h / g)v_3$ pre-jump. This allows us to sharpen the required distance between runners $3$ and $4$ to only $\frac{1}{3}$ — no matter how you shift the three points, you have two points in the valid region, and at least one of them is at least $1/3$ away from the invalid region for a fixed direction. We formalize these constructions into the following conditions for some real number $t$,
    \begin{align*}\min(\norm{tv_1},\norm{tv_2},\norm{t(3v_3) + 0.5}) \geq \frac{1}{4} \quad \text{and} \quad \norm{t(v_3 - v_4)} = \norm{tv_1} \leq \frac{1}{3}.\end{align*}
    We focus on time values in the form of $t = \frac{k}{v_1} + p$ for some integer $k$ and real number $p \in [\frac{1}{4v_1},\frac{1}{3v_1}]$. This fixes $\frac{1}{4} \leq \norm{tv_1} \leq \frac{1}{3}$ as the conditions require. We define $a = v_1 / g$, $b = v_2 / g$, and $c = v_3$, transforming the original conditions as follows:
    \begin{align*}\min(\norm{ta},\norm{tb},\norm{tc + 0.5}) \geq \frac{1}{4} \quad \text{and} \quad \norm{ta} \leq \frac{1}{3}.\end{align*}
    Now, we again have three cases that seek to put a bound on $a$.
    \begin{itemize}
        \item [--] $c$ is a multiple of $a$: Note that $v_4 = v_3 - v_1 = c - 3a$, so $v_4$ is also a multiple of $a$. If $a > 2$, then $v_1$, $v_3$, and $v_4$ share a common factor greater than $1$, yielding a loneliness value of at least $\frac{1}{4}$ by theorem \ref{the1}.
        \item [--] $c \neq \pm b \bmod a$: We know that $a$ and $b$ are co-prime by definition, so if $a$ is also at least $6$, then the conditions for Lemma \ref{lem2} with $g = a$ are satisfied, yielding the desired loneliness value of $\frac{1}{4}$.
        \item [--] $c \equiv \pm b \bmod a$: We focus on when $c = b \bmod a$ since the other case is symmetric. Furthermore, we assume $c \neq b$ since if they are equal, we can replace $c = v_3$ with $c = v_4$. This doesn't disrupt the generality of our assumption because $v_4 = v_3 + 3a \equiv v_3 \equiv \pm b \bmod a$. Since the difference between $c$ and $b$ are divisible by $a$ and nonzero, $|c - b| \geq a$. We note back that the starting position of the runner with velocity $a$ can be anywhere between $[\frac{1}{4},\frac{1}{3}]$, yielding a contiguous range of valid starting time $t \in [\frac{1}{4a},\frac{1}{3a}]$ of length $\frac{1}{12a}$. This means that the value of $t(b - c) + 0.5$ changes by at least $\frac{1}{12a} \cdot |b - c| \geq \frac{1}{12a} \cdot a = \frac{1}{12}$. By the Pigeonhole Principle, there must exist time $t$ such that $\norm{t(b - c) + 0.5}$ is at most $\frac{1}{2} - \frac{1}{24}$. Using the results from previous proofs, we know that for times of the form $t + \frac{h}{a}$, there  exists a valid $h$ such that $\min(\Norm{(t + \frac{h}{a})b},\Norm{(t + \frac{h}{a})c + 0.5)}) \geq \frac{1}{4}$ if there exists time $t$ where
        $$\norm{t(b - c) + 0.5} \leq \frac{1}{2} - \frac{1}{a}.$$
        Since we have bounded the norm below $\frac{1}{2} - \frac{1}{24}$, this should be true for all $a \geq 24$.
    \end{itemize}
    Combining the three cases, we learn that if $a \geq 24$, the loneliness value is at least $\frac{1}{4}$. This translates to a bound of $v_1 = 3 \cdot a \geq 72$. However, this approach only bounds one of the speeds, which is insufficient to invoke the ``Very Fast Runners'' Lemmas; thus, we now attempt to bound $c$. Here, we utilize the familiar technique of fixing runner $3$ at $\frac{1}{4}$ by only checking times of the form
    $$t = \frac{k}{c} + \frac{1}{4c}.$$
    This has the effect of expanding the valid region of value for $t(v_3 - v_4) = tv_1$ to $[\frac{1}{4},\frac{1}{2}] \cup [\frac{2}{3},\frac{3}{4}]$. The left interval gets expanded because with runner $3$ fixed at $\frac{1}{4}$, the gap between runners $3$ and $4$ can be as large as $\frac{1}{2}$. However, in the negative direction, our other point in the valid region is $\frac{1}{4} + \frac{1}{3}$, which only has a distance of $\frac{1}{3}$ before reaching the invalid region. To apply an analog of Lemma \ref{lem2}, We ensure that the denominator of $k / c$, $c$, is co-prime to $a$ and $b$ by dividing $c$ by their greatest common divisors. We focus on times of the form
    $$t = \frac{k}{\frac{c}{\gcd(a,c) \cdot \gcd(b,c)}} + \frac{1}{4c}.$$
    Note that $\gcd(a,c)$ and $\gcd(b,c)$ are both at most $2$. Otherwise, if they are $3$, then this would imply all $4$ speeds are multiples of $3$, contradicting the assumption, or if the gcd is at least $4$, then it would fall into the previously proven $g > 3$ cases. Furthermore, $\gcd(a,c)$ is co-prime to $\gcd(b,c)$ since $\gcd(a,b) = 1$, so their product is at most $1 \cdot 2 = 2$. Hence, we get that
    $$\frac{c}{\gcd(a,c) \cdot \gcd(b,c)} \geq \frac{c}{2}.$$
    We denote this new value as $d=\frac{c}{2}$ and claim that if $d$ is sufficiently large, then the loneliness value is at least $\frac{1}{4}$. Let $u$ be the modular inverse of $b$ such that $ub = 1 \bmod d$, then we again need to do casework on the value of $z \equiv ua \bmod d$ to gather a bound on $d$, where $1 \leq z \leq d/2$. We omit the case when $\frac{d}{2} < z < d$ as it is symmetric with trivial modification.
    \begin{itemize}
        \item $z = 1$: In this case,  runners $1$ and $2$ are synchronized. Instead of fixing the runner $3$ at $\frac{1}{4}$, we abandon our previous restriction and instead fix runner $3$ at $0$ with times of the form
        $$t = \frac{k}{d}.$$
        This eliminates the $\frac{1}{4c}$ factor, so $\norm{tv_1} = \norm{tv_2}$ for all $k$. With this new time, the valid range of positions for $tv_1$ is instead $[\frac{1}{4},\frac{1}{3}] \cup [\frac{2}{3},\frac{3}{4}]$. Hence, $k = u \cdot \left(\floor{\frac{d}{4}} + 1\right) / d$ should satisfy both runners $1$ and $2$.
        \item $2 \leq z \leq \frac{d}{4}$: For this interval, it is impossible for runner $1$ to skip over the $[\frac{1}{4},\frac{1}{2}]$ interval, since the stride is less than the interval length. Runner $2$ travels a contiguous region of length at least $\Floor{\frac{d}{2} - 1} / d$. Since runner $1$ is at least twice as fast as runner $2$, it must travel a contiguous region of length at least $2 \cdot \Floor{\frac{d}{2} - 1} / d$, and by the Pigeonhole Principle, if this quantity exceeds $\frac{3}{4}$, it must pass through the $[\frac{1}{4},\frac{1}{2}]$ interval at some point. Thus, we derive the following inequality
        $$2 \cdot \frac{\Floor{\frac{d}{2}} - 1}{d} \geq \frac{3}{4}.$$
        This is true for all $d \geq 10$.
        \item $\frac{d}{4} < z < \frac{d}{2}$ but $z \neq \frac{d}{3}$: We denote $L_0$ as the set of $\frac{h}{d}$ such that $\frac{h}{d} \in [\frac{1}{4},\frac{1}{2}] \cup [\frac{2}{3},\frac{3}{4}]$. This is the set of positions that we wish runner $1$ would land in. We then consider the set of positions $L_1$ such that one more step of size $z$ would cause the runner to land in $L_0$. Since $z > \frac{d}{4}$, the two intervals of $[\frac{1}{4}, \frac{1}{2}]$ are disjoint, so the intersection of the two sets is of length at most $\frac{3}{4} - \frac{2}{3} = \frac{1}{12}$. Hence we have 
        $$|L_0 \cup L_1| \geq 2 \cdot \Floor{\frac{d}{4}} + \Floor{\frac{d}{12}}.$$
        There are at least $\floor{\frac{d}{2}}$ number of valid positions for runner $2$, and if one of the first $\floor{\frac{d}{2}} - 1$ positions is in either $L_0$ or $L_1$, the desired conditions are met. Thus by the Pigeonhole Principle, we derive the following inequality
        $$\Floor{\frac{d}{2}} - 1 > d - |L_0 \cup L_1| \implies \Floor{\frac{d}{2}} - 1 > d - \left(2 \cdot \Floor{\frac{d}{4}} + \Floor{\frac{d}{12}}\right).$$
        This is true for all $d \geq 36$.
        \item $z = \frac{d}{3}$ or $\frac{d}{2}$: These two edge-cases are special because it forces runner $1$ to cycle around the same $2$ or $3$ points, thus defying the conditions needed for the Pigeonhole Principle. However, these cases imply that $b$ and $a$ are both multiples of $\frac{d}{3}$ or $\frac{d}{2}$. So if $d \geq 6$, Theorem \ref{the1} would take care of the rest as it would imply three speeds share a common factor greater than $1$.
    \end{itemize}
    Considering all of the cases, we attain a bound for $d < 36$, which translates to $v_3 < 72$. Furthermore, by assumption, we have $v_3 > v_4$, so this gives us bounds for $2$ speeds. Combining this result with the previous bound of $v_1 < 72$, we have $v_1,v_3,v_4 < 72$. If $v_2$ is sufficiently large, we invoke the ``Very Fast Runner'' Lemma, and otherwise, we have a finite number of cases to manually verify with a computer algorithm. At last, we conclude that if two speeds share a common factor of $3$, and the speeds are not in the form of $(1,2,3,12k)$ for some integer $k$, then the loneliness value is at least $\frac{1}{4}$.
\end{proof}

\subsection{Why is $(1,2,3,12k)$ an exception?}
Upon first glance, the proof from Subsection \ref{sec44} does not directly answer why $(1,2,3,12k)$ is an exception. But a closer inspection reveals the reasoning by recalling Remark \ref{rem1}. If we pass this family of speeds into our proof's casework, we see that it falls into the case when $v_1,v_2,v_3 < 72$. The proof then invokes the ``One Very Fast Runner'' Lemma \ref{lem3} to deal with the unbounded $v_4$. However, a crucial assumption is that $\ML(v_1,v_2,v_3)$ is strictly greater than $\frac{1}{4}$ by a non-trivial amount, producing a finite $v_4 / v_3$ ratio of reasonable magnitude. However, $(v_1,v_2,v_3) = (1,2,3)$ happens to be a (and the only) tight speed set for $n = 3$, resulting in a division by $0$ error. By the proven case of the Loneliness Spectrum Conjecture for $3$ moving runners \ref{the5}, the next smallest loneliness value after $\frac{1}{4}$ is $\frac{2}{7}$, which is indeed non-trivially greater than $\frac{1}{4}$. Finally, any scaling in the form of $(v_1,v_2,v_3) = l \cdot (1,2,3)$ for some integer $l > 1$ would immediately yield a loneliness value of at least $\frac{1}{4}$ due to Theorem \ref{the1}. Hence $(1,2,3,12k)$ remains the only possible family of exceptions to the case of $g = 3$, and thus rightfully not covered.

\begin{prop}[cf. \cite{kravitz} Thm 3.1]
    \label{prop1}
    For any positive integer $k$,
    $$\ML(1, 2, 3, 12k) = \frac{3k}{12k + 1}.$$
\end{prop}

\begin{proof}
    Since the first three runners $(1, 2, 3)$ form a tight speed set, the loneliness value is at most $1/4$. However, this can only happen for times $t = n / 4$ for some integer $0 < n < 4$, which does not satisfy runner $4$. Hence, the loneliness value must be strictly less than $1/4$. By Lemma \ref{lem1}, the relevant speed sums for the denominator are $\{3, 4, 5, 12k + 1, 12k + 2, 12k + 3\}$. We can rule out $3$ and $4$ since they divide $12k$. Out of the remaining denominators, the largest fraction we can form while being less than $1/4$ is
    $$\min\left(\frac{1}{5}, \frac{3k}{12k+1}, \frac{3k}{12k+2}, \frac{3k}{12k+3}\right) = \frac{3k}{12k + 1}$$
    attained at time $t = \frac{3k}{12k + 1}$. Direct computation shows that this indeed yields the desired loneliness value
    $$\min\left(\Norm{\frac{3k}{12k+1}}, \Norm{\frac{6k}{12k + 1}}, \Norm{\frac{9k}{12k + 1}}, \Norm{-\frac{3k}{12k + 1}}\right) = \frac{3k}{12k + 1} $$
    where the fourth term comes from the fact that $12k \cdot \frac{3k}{12k + 1} = (12k + 1)\frac{3k}{12k + 1} - \frac{3k}{12k + 1}$.
\end{proof}

We remark that this family conforms to the Amended Loneliness Spectrum Conjecture. Indeed, it also satisfies the original Spectrum Conjecture, so these examples do not necessitate the proposed amendment. In particular, the loneliness values are of the form $3k/(4(3k) + 1)$. Kravitz also explores this in \cite{kravitz} Theorem 3.1.

\section{Concluding remarks}
\label{sec5}
\subsection{Addressing the discrete spectrum}
\par The proof in Section \ref{sec4} does not directly address the discrete spectrum $[1 / (n + 1),1 / n)$ of the Amended Loneliness Spectrum Conjecture \ref{con2}, but rather focuses on the $\ML(v_1,\ldots,v_n) \geq 1 / n$ cases. While we do not have a strong mathematical argument, our computational experiments revealed only a sparse set of possible values allowed by the form,
$$\ML(v_1, \ldots, v_n) = \frac{s}{sn + k}$$
which we list out in Table \ref{table1}. Although $n=7, 8$ are unconvincing due to the low search bounds.

\begin{table}[h]
\centering
\begin{tabular}{@{}ccc@{}}
\toprule
$n$ & Max value of speeds & Observed values of $k$ \\
\midrule
3 & $1000$ & $\{1\}$ (Proved) \\
4 & $400$  & $\{1, 2\}$ \\
5 & $200$  & $\{1\}$ \\
6 & $100$   & $\{1, 3\}$ \\
7 & $70$   & $\{1, 2\}$ \\
8 & $60$   & $\{1\}$ \\
\bottomrule
\end{tabular}
\caption{Observed $k$ in $\ML=s/(sn+k)$ up to the max values.}
\label{table1}
\end{table}

Previously, we noted that the allowed values of $k$ appears to be highly restricted in our experiments, where $k \in \{1, 2\}$ for $n = 4$, $k \in \{1\}$ for $n = 5$, $k \in \{1, 3\}$ for $n = 6$, and $k \in \{1, 2\}$ for $n = 7$. Indeed, the set of possible $k$ values seem quite sparse and sporadic.

\begin{remark}
    Our computational results would suggest an even sharper variant of this conjecture. Namely, $k \leq n/2$. However, due to the computationally intensive nature of running experiments with high $n$ values, we could not run enough experiments to be confident of this conjecture. Perhaps a more computationally efficient simulation method with $n = 8$ and $n = 9$ would help settle this uncertainty. Even then, the new conjecture would not be able to fully explain why $k=2$ does not appear for $n=6$. Nevertheless, we propose the following stronger conjecture.
\end{remark}

\begin{conj}
    \label{con4}
    For any positive integers $v_1,\ldots,v_n$, we have either
    $$\exists s,k \in \mathbb{N},k \leq n/2,  \ML(v_1,\ldots,v_n) = \frac{s}{ns + k} \text{ or } \ML(v_1,\ldots,v_n) \geq \frac{1}{n}.$$
\end{conj}

\begin{remark}
    It may be tempting to make stronger claims about $k$. Prior to a major revision to the paper in which we extended computational results to $n = 7$ and $8$, we had thought that $k$ always divides $n$, or that $k$ seems to only be $1$ for odd $n$ and $1$ or $n/2$ for even $n$ (which is deceptively elegant). However, when we ran extended experiments on $n = 7$, we discovered numerous counterexamples, such as $\ML(1,2,3,4,5,7,18) = 3/23$ and $\ML(1,3,4,5,7,11,30) = 5/37$.
\end{remark}



\subsection{Future work}
\par Throughout this paper, we have presented a number of questions, ideas, and future directions regarding the (Amended) Loneliness Spectrum Conjecture and its related results. We summarize them below:
\begin{enumerate}
    \item Fully proving the Amended Loneliness Spectrum Conjecture for $n = 4$ following the results of Section \ref{sec4}. Can we complete the proof now that the speeds are almost pairwise co-prime?
    \item Gaining a more refined understanding of the possible values of $k$ by tackling Conjecture \ref{con3} and \ref{con4}. Running more numerical experiments with more efficient simulation methods.
    \item Extending the results of Section \ref{sec3} and more closely study families of speeds in the form $(a,b,b + a,b + 2a,\ldots)$.
\end{enumerate}

\section*{Acknowledgments}

We thank Noah Kravitz for introducing us to the problem and for many helpful discussions. We also thank Francisco Santos for proposing to study the possible values of $k$ in Section \ref{sec5}. Finally, we thank the anonymous reviewers whose details comments greatly improved the exposition of this paper.

\bibliographystyle{abbrv}
\bibliography{references}

\end{document}